\newcommand{\curly}{\mathcal}
\newcommand{\GG}{\curly G}
\newcommand{\SSS}{\curly S}
\newcommand{\HH}{\curly H}
\newcommand{\DD}{\curly D}
\newcommand{\RR}{\curly R}
\DeclareMathOperator{\Res}{Res}
\newcommand{\IR}{\mathop{\Res}} %for Residue operations with subscripts
\newcommand{\T}[3]{\mathbb{T}_{#1,#2}^{#3}}
\DeclareMathOperator{\Bad}{Bad}
\DeclareMathOperator{\Seg}{Seg}
\DeclareMathOperator{\Max}{Max}
\DeclareMathOperator{\code}{Code}
\DeclareMathOperator{\ocode}{OddCode}
\DeclareMathOperator{\ecode}{EvenCode}
\newcommand{\Comp}{\curly C}
\newcommand{\bfs}[1]{\boldsymbol{#1}}
\newcommand{\bfst}{{\bfs{t}}}
\newcommand{\Z}{\mathbb{Z}}%integers
\newcommand{\N}{\mathbb{N}}%natural numbers
\newcommand{\disjointunion}{\sqcup}
\newcommand{\DisjointUnion}{\bigsqcup}
\newcommand{\intersect}{\cap}
\newcommand{\Dirsum}{\bigoplus}
\theoremstyle{plain}
\newtheorem{prop}{Proposition}[section]
\newtheorem{lem}[prop]{Lemma}
\newtheorem{conj}[prop]{Conjecture}
\newtheorem{cor}[prop]{Corollary}
\newtheorem{thm}[prop]{Theorem}
\theoremstyle{definition}
\newtheorem{defn}[prop]{Definition}
\theoremstyle{remark}
\newtheorem{remark}[prop]{Remark}
\newtheorem{example}[prop]{Example}
\title[Iterated residue perspective on stable {G}rothendieck polynomials]{An iterated residue perspective on stable {G}rothendieck polynomials}
\author[Allman]{Justin Allman}
\address{Department of Mathematics, U.S.~Naval Academy \\ Annapolis, MD}
\email{allman@usna.edu}
\author[Rim{\'a}nyi]{Rich\'ard Rim\'anyi}
\address{Department of Mathematics, UNC--Chapel Hill \\ Phillips Hall CB\#3250 \\ Chapel Hill, NC 27599--3250}
\email{rimanyi@email.unc.edu}
\subjclass[2010]{05E05, 14N15}
\keywords{{G}rothendieck polynomials, {S}chur functions, iterated residues}
\begin{document}

%%%%%%%%%%%%%%
%% ABSTRACT %%
%%%%%%%%%%%%%%

\begin{abstract}
Grothendieck polynomials are important objects in the study of the $K$-theory of flag varieties. Their many remarkable properties have been studied in the context of algebraic geometry and tableaux combinatorics. We explore a new tool, similar to generating sequences, which we call the iterated residue technique. We prove new formulas on the calculus of iterated residues and use them to prove straightening laws and multiplication formulas for stable Grothendieck polynomials. As a further application of our method, we give new proofs that the $K$-Pieri rule and the expansions of Grothendieck polynomials in the Schur basis both exhibit alternating signs. As a consequence, we observe that our method implies a new combinatorial statement of the $K$-Pieri rule. Our results indicate that the iterated residue technique should be further explored as a new line of attack on open conjectures regarding positivity and stability, e.g.~of quiver polynomials and Thom polynomials, in $K$-theory.
\end{abstract}

\maketitle

\tableofcontents

%%%%%%%%%%%%%%%%%%%%%%%%%%
%% INTRODUCTORY SECTION %%
%%%%%%%%%%%%%%%%%%%%%%%%%%

\section{Introduction}

Grothendieck polynomials, one for each permutation, were introduced by Lascoux and Sch\"utzenberger in their seminal work \cite{alms1982} as representatives for $K$-classes of Schubert varieties in flag manifolds. They play the role in $K$-theory of the Schubert polynomials in cohomology. Stable versions of these polynomials, again corresponding to any permutation, appeared in the works of Fomin and Kirillov \cite{sfak1994,sfak1996} and were shown to be supersymmetric via an analogous construction to that of the Stanley symmetric functions. Buch showed that the stable Grothendieck polynomials corresponding to Grassmannian permutations, and therefore to partitions, form a $\Z$-linear basis for all stable Grothendieck polynomials \cite{ab2002.klr}. It is these polynomials which are the subject of this paper.

Over the past decade the stable Grothendieck polynomials corresponding to partitions have become important to many problems in the realm of algebraic combinatorics, for example in \cite{cl2000,ab2002.klr}, and in particular have appeared in several algebro-geometric contexts:
\begin{itemize}
\item as representatives of Schubert varieties in the $K$-theory of Grassmannians, e.g.\ in \cite{mb2002,ab2005.ckt},
\item as the proper basis to describe $K$-classes of degeneracy loci for Dynkin quivers, e.g.\ in \cite{ja2014.ir,ab2002.qv,ab2008,em2005}, and
\item as a basis for Thom polynomials in $K$-theory \cite{rras2017}.
\end{itemize}
In each of these settings (and more) alternating signs have been proven or at least conjectured to appear in the relevant formulae, see \cite{mb2002,ab2002.qv,ab2002.klr,ab2008,cl2000,em2005,rras2017} and many more. These results regarding alternating signs appear to depend on an amalgam of geometric techniques e.g.\ in \cite{mb2002}, combinatorics e.g.\ in \cite{ab2002.qv,ab2002.klr,ab2008,cl2000,em2005}, or identities in the theory of symmetric functions e.g.\ in \cite{cl2000}.

A formula for the stable Grothendieck polynomials in terms of iterated residues was established in \cite{rras2017} and was used in \cite{ja2014.ir} to expand quiver polynomials in $K$-theory. In this paper, we apply the iterated residue technique to
	\begin{enumerate}[label=(\arabic*)]
	\item establish independent proofs of straightening laws for expanding Grothendieck polynomials corresponding to non-partition integer sequences into $\Z$-linear combinations of those for partitions (Section \ref{ss:G.Straighten});
	\item multiply stable Grothendieck polynomials, in particular, to prove the finiteness and positivity of the $K$-theoretic Pieri rule (Sections \ref{s:GG.ratl}, \ref{s:K.Pieri.finiteness}, \ref{s:K.Pieri.positivity}, and \ref{s:translate.comb});
	\item conjecture an iterated residue formula for the general case of  multiplication (Section \ref{ss:q.greater.1}); and
	\item expand single stable Grothendieck polynomials into the Schur basis of symmetric functions and prove that this expansion has alternating signs (Section \ref{s:G.poly.S.expand}).
	\end{enumerate}

\subsection*{Acknowledgements}
{We would like to thank A.~Szenes for helpful discussions on iterated residues, and an anonymous referee for suggestions on (a) the definition of the $\GG_\bfst$ operation applied to a rational function and (b) the more aesthetically pleasing form of Corollary \ref{cor:GG.K.Pieri.comb.Asets} \emph{vis-{\`a}-vis} Theorem \ref{thm:GG.K.Pieri.comb}.} 
A portion of this work has been submitted as an extended abstract to the Formal Power Series in Algebraic Combinatorics conference, to be held July 2018 in Hanover, New Hampshire, USA.

\section{Iterated residues and Grothendieck polynomials}
\label{s:Res.Gpolys.Gamma}

\subsection{Notation}
\label{ss:notation}

Consider a sequence $I = (I_1,I_2,\ldots)$ of integers. Each integer $I_i$ is called a \emph{part} of $I$. The sequence $I$ is called \emph{finite} if only finitely many of its parts are nonzero. For a finite integer sequence $I$, we define its \emph{length} $\ell(I) = \max\{ i  : I_i \neq 0 \}$ and its \emph{weight} $|I| = \sum_i I_i$. A finite integer sequence $\lambda$ is called a \emph{partition} if $\lambda_1 \geq \lambda_2 \geq \cdots \geq \lambda_{\ell(\lambda)} > 0$. Throughout, we identify the partition $\lambda$ with its Young diagram of boxes, e.g.
\[
\lambda = (4,2,1) \leftrightsquigarrow  
\ytableausetup{centertableaux,smalltableaux}
{\tiny
\ydiagram{4,2,1}
}.
\]
If the Young diagram for $\mu$ contains $\lambda$ as a subdiagram, we write $\mu\supset\lambda$, and in this case we let $\mu/\lambda$ denote the resulting skew diagram, which is called a \emph{horizontal $n$-strip} if $|\mu|-|\lambda|=n$ and no two boxes of $\mu/\lambda$ appear in the same column.

Given two finite integer sequences $I$ and $J$ of respective lengths $k$ and $l$ we define the sequence $I,J$ to be the concatenation $(I_1,\ldots,I_k,J_1,\ldots,J_l)$ and the sequence $I+J$ to be $(I_1+J_1,\ldots,I_m+J_m)$ where $m=\max\{k,l\}$ and it is understood that $I_i = 0$ for $i>k$ and likewise $J_j=0$ for $j>l$. Similarly, we can define $I-J$. Moreover, we let $\N = \{0,1,2,\ldots\}$ and for any positive integer $n$, we set $[n] = \{1,2,\ldots,n\}$. 

For variables $\bfs{u} = \{u_1,u_2,\ldots\}$, we let $\Z[\bfs{u}]$ and $\Z[\bfs{u}^{\pm1}]$ respectively denote the ring of polynomials and Laurent polynomials in $\bfs{u}$. Similarly, $\Z(\bfs{u})$ denotes the ring of rational functions in $\bfs{u}$. We will also denote the concatenation of finite lists of variables $\bfs{u} = \{u_1,\ldots,u_k\}$ and $\bfs{v}=\{v_1,\ldots,v_l\}$ with $\bfs{u},\bfs{v} = \{u_1,\ldots,u_k,v_1,\ldots,v_l\}$.

\subsection{Iterated residues}
\label{ss:Residues}

Given a meromorphic function $\phi(z)$ in the single complex indeterminate $z$, we define the operation
\begin{equation}
\label{eqn:Res.1var}
\IR_{z=0,\infty}(\phi(z)\,dz) = \IR_{z=0}(\phi(z)\,dz) + \IR_{z=\infty}(\phi(z)\,dz)
\end{equation}
where we recall that $\IR_{z=\infty}(\phi(z)\,dz) = \IR_{z=0}(-z^{-2}\phi(1/z)\,dz)$. Now, for $f(\bfs{z})$ a meromorphic function in $\bfs{z} = (z_1,\ldots,z_p)$ we define
\begin{equation}
\label{eqn:Res.multvar}
\IR_{\bfs{z}=0,\infty}\left(f(\bfs{z})\,dz_p\cdots dz_1 \right) = 
	\IR_{z_1=0,\infty} \cdots  \IR_{z_p=0,\infty} \left(f(\bfs{z})\,dz_p\cdots dz_1 \right).
\end{equation}
%In fact, for any permutation $w\in S_p$, we define
%\begin{equation}
%\label{eqn:Res.multvar.perm}
%\IR_{\bfs{z}=0,\infty}^w \left(f(\bfs{z})\,dz_{w(p)}\cdots dz_{w(1)} \right) = 
%	\IR_{z_{w(1)}=0,\infty} \cdots  \IR_{z_{w(p)}=0,\infty} \left(f(\bfs{z})\,dz_{w(p)}\cdots dz_{w(1)} \right).
%\end{equation}
%When the operator $\IR$ is not decorated with a superscript, as in Equation \eqref{eqn:Res.multvar}, it is understood that $w$ is the identity. Moreover, throughout the sequel we let $\tau_i \in S_p$ denote the simple transposition interchanging $i$ and $i+1$.

\subsection{Stable {G}rothendieck polynomials}
\label{ss:Gpolys}

Let $p$ be a positive integer, $I = (I_1,\ldots,I_p)$ a sequence of integers, and $\bfs{z} = \{z_1,z_2,\ldots,z_p\}$ be a set of complex-valued indeterminates. We need as many variables $z_i$ as there are entries in $I$. Let $\bfs{\alpha}=\{\alpha_1,\alpha_2,\ldots,\alpha_k\}$ and $\bfs{\beta}=\{\beta_1,\beta_2,\ldots,\beta_l\}$ be sets of variables. We now define a (Laurent) polynomial $G_I(\bfs{\alpha};\bfs{\beta})$, actually a polynomial in the variables $\alpha_i$ and $\beta_j^{-1}$, as follows. We set
\begin{gather}
\Delta(\bfs{z}) = \prod_{1\leq i<j\leq p} \left(1 - \frac{z_j}{z_i} \right)
	\label{eqn:Delta} \\
P(\bfs{z} | \bfs{\alpha};\bfs{\beta}) 
	= \prod_{i=1}^p (1-z_i)^{k-l} \frac{\prod_{b=1}^l (1-z_i \beta_b)}{\prod_{a=1}^k (1-z_i\alpha_a)}
	\label{eqn:P}
\end{gather}
and define
\begin{equation}
\label{eqn:defn.G}
G_I(\bfs{\alpha};\bfs{\beta}) = 
	\IR_{\bfs{z} = 0,\infty}\left(
		\frac{\prod_{i=1}^p (1-z_i)^{I_i - i}}{z_1\cdots z_p} \, P(\bfs{z}|\bfs{\alpha};\bfs\beta) \, \Delta(\bfs{z}) \,dz_p\cdots dz_1\right)
\end{equation}
which we call a \emph{double stable Grothendieck polynomial}. Remarkably, the second author and Szenes have proven that when $I$ is a partition, i.e.~$I_1\geq I_2 \geq \cdots \geq I_p>0$, the formula above agrees with previous formulations of the polynomial $G_I$ \cite{rras2017}. In particular, $G_I$ is \emph{supersymmetric} in the variables $\bfs\alpha$ and $\bfs\beta$; i.e.\ it is separately symmetric in each set of variables. For two such notable formulations of Grothendieck polynomials, we refer the reader to the original definition of Lascoux--Sch\"utzenberger in terms of divided difference operators \cite{alms1982} and the combinatorial set-valued tableaux description of Buch \cite{ab2002.klr}. 

We remark that the seminal Lascoux--Sch\"utzenberger paper \cite{alms1982} actually defines a (non-stable) Grothendieck polynomial for any permutation; our description \eqref{eqn:defn.G} applies only to Grothendieck polynomials for so-called Grassmannian permutations (which are in one-to-one correspondence with partitions). Fomin and Kirollov defined the stable versions of these polynomials and proved that they are supersymmetric \cite{sfak1994,sfak1996}. The relationship of the stable polynomials $G_I$ above to the non-stable double Grothendieck polynomials is analogous to the relationship between Stanley symmetric functions (stable and supersymmetric) and double Schubert polynomials. 

However, since the (non-stable) Grothendieck polynomials corresponding to Grassmannian permutations are already separately symmetric in the $\bfs\alpha$ and $\bfs\beta$ variables, they coincide with the stable versions evaluated on finitely many variables. Moreover, Buch proved \cite[Theorem~4]{ab2002.qv} that \emph{every} stable Grothendieck polynomial can be expressed as a polynomial in those corresponding to partitions (or equivalently Grassmannian permutations).

\begin{remark}
\label{rem:Fubini}
Since the meromorphic function of which we compute residues in Equation \eqref{eqn:defn.G} does not blow up along any hyperplane $z_i = z_j$, Fubini's Theorem implies that the residues can be taken in any order. As a consequence of this fact, we obtain that for any permutation $w$, applying our residue operation to 
\[
\frac{\prod_{i=1}^p (1-z_{w(i)})^{I_i - i}}{z_{w(1)} \cdots z_{w(p)}} \, P(w\cdot\bfs{z}|\bfs{\alpha};\bfs\beta) \, \Delta(w\cdot\bfs{z})
= 
\frac{\prod_{i=1}^p (1-z_{w(i)})^{I_i - i}}{z_1 \cdots z_p} \, P(\bfs{z}|\bfs{\alpha};\bfs\beta) \, \Delta(w\cdot\bfs{z}) 
\]
also evaluates to $G_I(\bfs{\alpha};\bfs\beta)$. We have used the fact that $P$ and the monomial $z_1\cdots z_p$ are both symmetric in the $\bfs z$ variables. We will use this observation in the sequel.
\end{remark}

\begin{remark}
\label{rem:suppress.alpha.beta}
Whenever a result about stable Grothendieck polynomials is true regardless of the size $k$ and $l$ of the sets of variables $\bfs\alpha$ and $\bfs\beta$, as is the case for Equation \eqref{eqn:G.I.to.sum.lambda} below (this will be evident from our method), then we may suppress $\bfs\alpha$ and $\bfs\beta$ in our notation.
\end{remark}

Let $\Gamma$ denote the $\Z$-linear span $\Dirsum_\lambda \Z\cdot G_\lambda$ taken over all partitions $\lambda$. In fact, $\Gamma$ forms a $\Z$-algebra with multiplication $G_\lambda \, G_\mu = \sum_{\nu} c_{\lambda,\mu}^\nu G_\nu$. The structure constants $c_{\lambda,\mu}^\nu$, only finitely many of which are nonzero for fixed $\lambda$ and $\mu$, are the \emph{$K$-theoretic Littlewood--Richardson numbers} and can be computed by generalized tableaux combinatorics, a result of Buch \cite{ab2002.klr}. We will investigate the coefficients $c_{\lambda,(n)}^\nu$ for $n$ a positive integer in the sequel.

\subsection{An application: straightening laws}
\label{ss:G.Straighten}

In the context of studying quiver polynomials, Buch first introduced stable Grothendieck polynomials corresponding to integer sequences, not necessarily partitions \cite[Section~3]{ab2002.qv}. In \emph{op.~cit.} the polynomial $G_I(\bfs\alpha;0)$, a.k.a.~the \emph{single} Grothendieck polynomial, is defined by a determinant and furthermore, by switching rows in this determinant, a formula to ``straighten'' $G_I$ as $\Z$-linear combination of $G_\lambda$ for partitions $\lambda$ was proven. 

\begin{thm}[c.f.\ \cite{ab2002.qv}, Corollary~3.3]
\label{thm:G.I.in.Gamma}
For every integer sequence $I$, the polynomial $G_I$ defines a unique element of $\Gamma$; i.e.~there exist unique integers ${d_\lambda^I}$, only finitely many of which are nonzero, such that 
\begin{equation}
\label{eqn:G.I.to.sum.lambda}
G_I = \sum_{\mathrm{partitions}~\lambda} 
				d_{\lambda}^I \, G_\lambda \in \Gamma.
\end{equation}
\end{thm}

However, the size of the determinant describing $G_I(\bfs\alpha;0)$ in \cite{ab2002.qv} depends on the number of $\bfs\alpha$ variables. We note that our formula \eqref{eqn:defn.G} defines the \emph{double} Grothendieck polynomial $G_I(\bfs\alpha;\bfs\beta)$ independently of the number of $\bfs\alpha$ and/or $\bfs\beta$ variables; its complexity is related instead to the length of $I$. This is more similar to determinantal formulas for Schur functions, e.g.~the Jacobi--Trudi formula \cite[Ch.\ I, (3.4)]{im1995}. We will reiterate this point in Remark \ref{rem:GI.det.vs.GI.IR}.

The goal of this subsection is to give an iterated residue proof of two straightening rules (see \eqref{eqn:G.Straighten} and \eqref{eqn:drop.nonpos} below), which together imply a new proof of Theorem \ref{thm:G.I.in.Gamma} which at once 
\begin{enumerate}[label=(\alph*),leftmargin=*]
\item is independent of the number of $\bfs\alpha$ variables, and
\item already includes the second set of variables $\bfs\beta$.
\end{enumerate}

\begin{thm}[c.f.~\cite{ab2002.qv}, Equation~(3.1)]
\label{thm:G.Straighten}
For any integer sequences $I$ and $J$, and any integers $a$ and $b$, we have the following ``straightening law'':
\begin{equation}
\label{eqn:G.Straighten}
G_{I,a,b,J} - G_{I,a+1,b,J} = G_{I,b,a+1,J} - G_{I,b-1,a+1,J}.
\end{equation}
Furthermore, if $J$ has only non-positive parts, then we have
\begin{equation}
\label{eqn:drop.nonpos}
G_{I,J} = G_I.
\end{equation}
\end{thm}

\begin{proof}
We prove \eqref{eqn:G.Straighten} in the case that $I$ and $J$ are empty. The general case is analogous, only with more notation. For the righthand side of \eqref{eqn:G.Straighten}, we consider the result of applying $\Res_{\bfs z =0,\infty}$ to
\begin{equation}
\label{eqn:G.Straighten.proof.1}
\left[(1-z_1)^{b-1}(1-z_2)^{a-1} - (1-z_1)^{b-2}(1-z_2)^{a-1} \right] \left(1- \frac{z_2}{z_1} \right) \frac{P(\bfs z|\bfs\alpha;\bfs\beta)}{z_1z_2} \,dz_2 dz_1.
\end{equation}
Using the observation of Remark \ref{rem:Fubini} we apply the simple transposition $z_1\leftrightarrow z_2$ and Fubini's theorem to obtain
\begin{equation}
\label{eqn:G.Straighten.proof.2}
\mathop{\underbrace{\mathop{\left[(1-z_1)^{a-1}(1-z_2)^{b-2} - (1-z_1)^{a-1}(1-z_2)^{b-1}\right] \left(\frac{z_1}{z_2}\right)}}}_{\clubsuit} \left(1- \frac{z_2}{z_1} \right) \frac{P(\bfs z|\bfs\alpha;\bfs\beta)}{z_1z_2}\,dz_2 dz_1.
\end{equation}
The portion of the above expression labeled $\clubsuit$ is equal to
\[
(1-z_1)^{a-1}(1-z_2)^{b-2} - (1-z_1)^{a+1-1}(1-z_2)^{b-2}
\]
and hence the result of applying $\Res_{\bfs{z}=0,\infty}$ to \eqref{eqn:G.Straighten.proof.2} is $G_{a,b} - G_{a+1,b}$ as desired. We prove \eqref{eqn:drop.nonpos} in the case that $\ell(J) = 1$; the general result follows inductively. Write $J = (j)$ with $j\leq 0$ and assume that $\ell(I) = p$. Set $\bfs{z} = (z_1,\ldots,z_p)$ and apply Fubini's theorem to get
\begin{multline}
\label{eqn:drop.nonpos.proof.1}
G_{I,j}(\bfs\alpha;\bfs\beta) = \IR_{\bfs{z}=0,\infty} \left(
	\frac{\prod_{i=1}^p (1-z_i)^{I_i-i}}{z_1\cdots z_p} 
		P(\bfs z | \bfs\alpha;\bfs\beta) \Delta(\bfs z)\, 
		dz_p\cdots dz_1 \right) 
	\\
\times \IR_{\zeta=0,\infty} \left( 
	(1-\zeta)^{j-(p+1)} \cdot \frac{1}{\zeta} \cdot 
	P(\zeta | \bfs\alpha;\bfs\beta) \cdot 
	\prod_{i=1}^p \left(1 - \frac{\zeta}{z_i}\right) \, d\zeta \right).
\end{multline}
We concentrate on the residues corresponding to the $\zeta$ variable. Set
\[
g(\zeta) = (1-\zeta)^{j-(p+1)} \cdot \frac{1}{\zeta} \cdot 
	P(\zeta | \bfs\alpha;\bfs\beta) \cdot 
	\prod_{i=1}^p \left(1 - \frac{\zeta}{z_i}\right)
\]
and note $\zeta=0$ is a simple pole of $g$. Thus, $\Res_{\zeta=0}(g\,d\zeta) = \lim_{\zeta\to0} (\zeta\cdot g) = 1$. 
To compute $\Res_{\zeta=\infty}(g\,d\zeta)$ we consider
\[
\widetilde{g}(\zeta) = -\frac{1}{\zeta^2}g(1/\zeta) = -\frac{1}{\zeta^2} \cdot (1-1/\zeta)^{j-p-1} \cdot \zeta \cdot P(1/\zeta|\bfs\alpha;\bfs\beta) \prod_{i=1}^p \left(1 - \frac{1}{\zeta z_i} \cdot \right)
\]
and use that $\Res_{\zeta=\infty}(g\,d\zeta) = \Res_{\zeta=0}(\widetilde{g}\,d\zeta)$. A calculation shows that
\[
\widetilde{g}(\zeta) = -\frac{1}{\zeta^j}\cdot(\zeta-1)^{j-p-1+k-l}\cdot\frac{\prod_{b=1}^l (\zeta-\beta_b)}{\prod_{a=1}^k (\zeta-\alpha_a)}\cdot\prod_{i=1}^p (\zeta-1/z_i).
\]
Since $j\leq 0$, we see that $\widetilde{g}$ is holomorphic at $\zeta=0$ and hence $\Res_{\zeta=0}(\widetilde g\,d\zeta) = 0$. This implies $\Res_{\zeta=0,\infty}(g\,d\zeta) = 1$ and hence Equation \eqref{eqn:drop.nonpos.proof.1} becomes
\[
G_{I,j}(\bfs\alpha;\bfs\beta) = \IR_{\bfs{z}=0,\infty} \left(
	\frac{\prod_{i=1}^p (1-z_i)^{I_i-i}}{z_1\cdots z_p} 
		P(\bfs z | \bfs\alpha;\bfs\beta) \Delta(\bfs z)\, 
		dz_p\cdots dz_1 \right) \cdot 1 = G_{I}(\bfs\alpha;\bfs\beta)
\]
as desired. Moreover, observe that proof has been independent of $k$ and $l$.
\end{proof}

\begin{proof}
By iteratively applying \eqref{eqn:G.Straighten} and \eqref{eqn:drop.nonpos} to $G_I$, we obtain an expansion of the form \eqref{eqn:G.I.to.sum.lambda}. The uniqueness of the expansion is implied by the fact that the polynomials $G_\lambda$ are linearly independent, say because their lowest degree homogeneous parts $s_\lambda$ are linearly independent.
\end{proof}

%In Sections \ref{s:K.Pieri.rule}, \ref{s:K.Pieri.finiteness}, \ref{s:K.Pieri.positivity}, and \ref{s:translate.comb} we consider the product $G_\lambda\,G_\mu$ in $\Gamma$ in the case that $\ell(\mu) = 1$. In Sections ?? we consider expansions of stable Grothendieck polynomials in the Schur basis.

\section{The operation $\GG_\bfst$ on Laurent polynomials}
\label{s:GG.Lpolys}

\begin{defn}
\label{defn:GG}
For any monomial $t^I$ we define 
\begin{equation}
\label{eqn:GG.defn}
\GG_{\bfst}(t^I) = G_I
\end{equation}
and extend the operation linearly to obtain a $\Z$-module mapping $\GG_{\bfst}:\Z[\bfst^{\pm1}] \to \Gamma$. If two Laurent polynomials $f_1$ and $f_2$ have the property that $\GG_\bfst(f_1 - f_2) = 0 \in \Gamma$, then we say that $f_1$ is \emph{$\GG_\bfst$-equivalent} to $f_2$.
\end{defn}

We now observe that the straightening laws \eqref{eqn:G.Straighten} and \eqref{eqn:drop.nonpos} can be encoded in the language of the $\GG_{\bfst}$ operation.

\begin{thm}[$\GG_{\bfst}$ version of Theorem \ref{thm:G.Straighten}]
\label{thm:GG.Straighten}
With $I$, $J$, $a$, $b$ as in Theorem \ref{thm:G.Straighten}, if $\ell(I)=k-1$ then we have
\begin{equation}
\label{eqn:GG.Straighten}
\GG_\bfst \left( 
t^{I,a,b,J}(1-t_{k})
\right) 
	= \GG_\bfst \left(
	- t^{I,b-1,a+1,J}(1-t_{k})
	\right). 
\end{equation}
Furthermore, if $f \in \Z[\bfst^{\pm1}]$ such that the exponent of every $t_i$ for $i>k$ is non-positive in every monomial term of $f$, then
\begin{equation}
\label{eqn:GG.drop.nonpos}
\GG_\bfst \left(f(\bfst)\right) = \GG_\bfst \left(f(t_1,\ldots,t_k,1,1,\ldots)\right).
\end{equation}
\end{thm}

\begin{proof}
Equations \eqref{eqn:GG.Straighten} and \eqref{eqn:GG.drop.nonpos} are respectively equivalent to Equations \eqref{eqn:G.Straighten} and \eqref{eqn:drop.nonpos}.
\end{proof}

\begin{cor}
\label{cor:GG.Straighten.symmetric}
Suppose that $f(\bfst)$ is a Laurent polynomial symmetric in $t_k$ and $t_{k+1}$. Then
\begin{equation}
\GG_\bfst \left( t^{I,a,b,J}(1-t_{k})\cdot f
\right) 
	= \GG_\bfst \left(
	- t^{I,b-1,a+1,J}(1-t_{k}) \cdot f
	\right).
\end{equation}
\end{cor}

\begin{proof}
By linearity, we assume that $f = t_k^c t_{k+1}^d + t_k^d t_{k+1}^c$ for some integers $c$ and $d$. We obtain
\[
t^{I,a,b,J} (1-t_k) f = t^{I,a+c,b+d,J}(1-t_k) + t^{I,a+d,b+c,J}(1-t_k).
\]
Apply \eqref{eqn:GG.Straighten} to both terms above to get the $\GG_\bfst$-equivalent polynomial
\[
-t^{I,b+d-1,a+c+1,J}(1-t_k) - t^{I,b+c-1,a+d+1,J}(1-t_k) = -t^{I,b-1,a+1,J}(1-t_k) f
\]
as desired.
\end{proof}

\subsection{More results on the calculus of iterated residues}
\label{ss:calculus}
The goal of the remainder of this section is to generalize the ``evaluate at $t=1$'' property evident in Equation \eqref{eqn:GG.drop.nonpos}. That is, we seek an elementary calculus to simplify monomials $t^I$ when the exponent of \emph{any} variable $t_i$ is too small. We begin with the following result.

\begin{cor}
\label{cor:binom.G.relation}
For any finite integer sequences $I$ and $J$, if $a>\ell(J)$ we have
\begin{equation}
\label{eqn:binom.G.relation}
\sum_{i=0}^{\ell(J)+1} (-1)^i \binom{\ell(J)+1}{i} G_{I,-a+i,J} = 0,
\end{equation}
or equivalently, with $\ell(I)=k-1$ we have
\begin{equation}
\label{eqn:binom.GG.relation}
\GG_\bfst \left( t^{I,-a,J}(1-t_{k})^{\ell(J)+1} \right) = 0.
\end{equation}
\end{cor}

\begin{proof}
We will use \eqref{eqn:GG.Straighten} to ``translate'' the negative entry $a$ to the right in the exponents of Equation \eqref{eqn:binom.GG.relation}. As such, the sequence $I$ will not effect our computation, and so we do the proof in the case that $\ell(I)=0$.

Write $l=\ell(J)$, $b=J_1$, and $\hat{J}=(J_2,\ldots,J_l)$; we seek to prove that 
\[\GG_\bfst\left(t^{-a,b,\hat{J}}(1-t_1)^{l+1}\right) = 0. \]
We will induct on $l$. In the case that $l=0$ we assume $a>0$ and hence
$\GG_\bfst\left(t_1^{-a}(1-t_1) \right) = 0$ by using Equation \eqref{eqn:GG.drop.nonpos} and setting $t_1=1$. For general $l$ we have
\[
t^{-a,b,\hat{J}}(1-t_1)^{l+1} 
= \left( \sum_{r=0}^l (-1)^l \binom{l}{r} t^{-a+r,b,\hat{J}} \right)(1-t_1)
\]
Hence, using Equation \eqref{eqn:GG.Straighten} on each term in the summation above yields the $\GG_\bfst$-equivalent polynomial
\begin{align*}
-\left(\sum_{r=0}^l (-1)^l \binom{l}{r}t^{b-1,-a+r+1,\hat{J}}\right)(1-t_1)
& = -t^{b-1,-a+1,\hat{J}}(1-t_2)^l\,(1-t_1) \\
& = t^{b,-(a-1),\hat{J}}(1-t_2)^l - t^{b-1,-(a-1),\hat{J}}(1-t_2)^l
\end{align*}
Now, since $\ell(\hat{J})=l-1$ and $a>l$ implies $a-1>l-1$, both terms in the last line are $\GG_\bfst$-equivalent to zero by induction.
\end{proof}

The observation that Grothendieck polynomial expressions can be simplified by algebraic operations on polynomials, i.e.~by specializing trailing variables to $1$ in Equation \eqref{eqn:GG.drop.nonpos}, has a generalization which we now describe. Given a function $f \in \Z(x)$ we let $T_{d,a}f(x)$ denote the degree $d$ Taylor polynomial for $f(x)$ centered at $a$.

\begin{defn}
\label{defn:DD}
Define the operation 
\[
\DD_k^{k+l}:\Z(t_1,\ldots,t_{k+l}) \to \Z(t_1,\ldots,\widehat{t_k},\ldots,t_{k+l}) [t_k^{\pm1}]
\] 
\begin{equation}
\label{eqn:DD.defn}
f \longmapsto \left.T_{l,1}\left(\left.f\right|_{t_k=x^{-1}}\right)(x)\right|_{x=t_k^{-1}}.
\end{equation}
Observe that $\DD_k^{k+l}$ is a composition of $\Z$-linear maps and therefore is linear.
\end{defn}

\begin{thm}
\label{thm:DD.GG.Equiv}
Let $I$, $J$ be finite integer sequences with $\ell(I)=k-1$, $\ell(J)=l$. If $a\geq l$, then 
\begin{equation}
\label{eqn:GG.DD}
\GG_{\bfst}\left(t^{I,-a,J}\right) = \GG_{\bfst} \left( \DD_k^{k+l}\left(t^{I,-a,J}\right) \right).
\end{equation}
\end{thm}

\begin{proof}
First we observe that 
\[
T_{l,1}(x^a) = \sum_{u=0}^l \binom{a}{u}(x-1)^u
\]
and hence
\begin{equation}
\label{eqn:DD.proof.1}
\DD_k^{k+l}\left(t^{I,-a,J}\right) = t^{I,0,J}\sum_{u=0}^l\binom{a}{u}\left(t_k^{-1} - 1\right)^u
\end{equation}
Now write $b=a-l$. Suppose that $b>0$ and we claim the following relation (which holds for integers $c$ with $0\leq c < b$):
\begin{equation}
\label{eqn:DD.proof.claim}
\GG_\bfst\left( t^{I,-c,J}\sum_{u=0}^l\binom{a-c}{u}\left(t_k^{-1} - 1\right)^u \right)
 = 
\GG_\bfst\left( t^{I,-c-1,J}\sum_{u=0}^l\binom{a-c-1}{u}\left(t_k^{-1} - 1\right)^u \right).
\end{equation}
We use that $\binom{a-c}{u} = \binom{a-c-1}{u} + \binom{a-c-1}{u-1}$ to rewrite the argument of the lefthand side in \eqref{eqn:DD.proof.claim} as
\begin{align*}
& t^{I,-c,J}\sum_{u=0}^l\binom{a-c-1}{u}\left(t_k^{-1} - 1\right)^u
	+ t^{I,-c,J}\sum_{u=1}^l\binom{a-c-1}{u-1}\left(t_k^{-1} - 1\right)^u \\
	&= t^{I,-c,J}\sum_{u=0}^l\binom{a-c-1}{u}\left(t_k^{-1} - 1\right)^u 
	+ t^{I,-c,J}\left(t_k^{-1} - 1\right)\sum_{u=0}^{l-1}\binom{a-c-1}{u}\left(t_k^{-1} - 1\right)^u \\
	&=t^{I,-c,J}\sum_{u=0}^l\binom{a-c-1}{u}\left(t_k^{-1} - 1\right)^u \\
	&\quad\quad + t^{I,-c,J}\left(t_k^{-1} - 1\right) 
		\left[ 
		\sum_{u=0}^{l}\binom{a-c-1}{u}\left(t_k^{-1} - 1\right)^u - \binom{a-c-1}{l}\left(t_k^{-1} - 1\right)^l
		\right] \\
	&=t^{I,-c-1,J}\sum_{u=0}^l\binom{a-c-1}{u}\left(t_k^{-1} - 1\right)^u - \binom{a-c-1}{l}t^{I,-c,J}\left(t_k^{-1} - 1\right)^{l+1}.
\end{align*}
The last subtracted term in the last line is further equal to $\binom{a-c-1}{l}t^{I,-c-l-1,J}\left(1-t_k\right)^{l+1}$ which is $\GG_\bfst$-equivalent to zero by Corollary \ref{cor:binom.G.relation}. This establishes the claim in \eqref{eqn:DD.proof.claim}. By inducting on $c$, we have that
\begin{multline*}
\GG_\bfst\left(
	\DD_k^{k+l}\left(t^{I,-a,J}\right) \right)  = \GG_\bfst\left(
		 t^{I,0,J}\sum_{u=0}^l\binom{a}{u}\left(t_k^{-1} - 1\right)^u
		 \right) \\
	 = \GG_\bfst\left(
		t^{I,-b,J}\sum_{u=0}^l\binom{a-b}{u}\left(t_k^{-1}-1\right)^u
		\right) \\
	 = \GG_\bfst\left(
		t^{I,-b,J}\left[1 + \left(t_k^{-1}-1\right)\right]^l
		\right) 
	 = \GG_\bfst\left(t^{I,-b-l,J}\right)
	\end{multline*}
where we used that $a-b=l$ in the third equality. Since $a=b+l$, this establishes the desired result for $b>0$. On the other hand, if $b=0$ (equivalently $a=l$) then each of the equalities above is immediate, and so the theorem is proved.
\end{proof}

\begin{remark}
When $l=0$ in the Theorem \ref{thm:DD.GG.Equiv}, the result should be viewed as a generalization to the content of Equation \eqref{eqn:GG.drop.nonpos}. That is, we observe that the operation $\DD_{k}^k$ is equivalent to setting the last $t$ variable (in this case $t_k$) equal to $1$ whenever the power on $t_k$ is non-positive. 
\end{remark}

\section{Extending the $\GG_\bfst$ operation to rational functions}
\label{s:GG.ratl}

%We need to prove that for the sequence $\mu$ with $\ell(\mu) = q$ and $N>\max(\mu)$, we have
%\[
%\GG_\bfst \left(
%t^\mu \, \sum_{(n_1,\ldots,n_q)\vdash N}
%	\left[ 
%		\prod_{j=1}^q \frac{(1-t_j)^{1-\delta_{0,n_j}}}{t_j^{n_j}}
%	\right]
%\right)=0.
%\]
%

\subsection{Residues and $\GG_\bfst$}
\label{ss:GG.and.RR}

We define the following operation on any rational function $f(z_1,\ldots,z_p)$,
\[
\RR_{(\bfs{z}|\bfs{\alpha};\bfs{\beta})}\left(f(z_1,\ldots,z_p)\right) := \\
  \IR_{z=0,\infty}\left(f(z_1,\ldots,z_p)\,\frac{\prod_{i=1}^p (1-z_i)^{- i}}{z_1\cdots z_p} \, P(\bfs{z}|\bfs{\alpha};\bfs\beta) \, \Delta(\bfs{z}) \,dz_p\cdots dz_1\right).
\]
Combining this with Equation \eqref{eqn:defn.G} and Definition \ref{defn:GG}, we immediately obtain the following theorem. 
\begin{thm}
\label{thm:GG.Laurent.poly}
For any Laurent polynomial $g(t_1,\ldots,t_p)$ we have
\[
\pushQED{\qed}
\GG_{\bfst}\left(g(t_1,\ldots,t_p)\right)(\bfs{\alpha};\bfs{\beta}) = \RR_{(\bfs{z}|\bfs{\alpha};\bfs{\beta})}\left(g(1-z_1,\ldots,1-z_p)\right).
\qedhere
\popQED
\]
\end{thm}
The goal of this section is extend the definition of $\GG_\bfst(g)$ to the case that $g$ is a rational function in such a way that Theorem \ref{thm:GG.Laurent.poly} remains true. We do not expect this to be possible for \emph{any} rational function $g$. However, an important special class of functions are those of the form
\begin{equation}
\label{eqn:f.I.J.defn}
f_{I,J}(\bfst) = 
t^{I,J}\, \prod_{i=1}^p \prod_{j=1}^q \frac{1-t_i}{1-t_i/t_{p+j}}
\end{equation}
for integer sequences $I\in\Z^p$ and $J\in\Z^q$. Our interest in $f_{I,J}$ is the following result.

\begin{prop}
\label{prop:GG.mult.RR}
For $I$, $J$, and $f_{I,J}(t_1,\ldots,t_{p+q})$ as above, we have
\begin{equation}
\label{eqn:GG.mult.RR}
G_I(\bfs\alpha;\bfs\beta)\,G_J(\bfs\alpha;\bfs\beta) = \RR_{(\bfs{z}|\bfs{\alpha};\bfs{\beta})}\left(f_{I,J}(1-z_1,\ldots,1-z_{p+q})\right)
\end{equation}
where $\bfs{z} = (z_1,\ldots,z_{p+q})$ and the result holds for any choice of $\bfs\alpha$ and $\bfs\beta$.
\end{prop}

\begin{proof}
Using Equation \eqref{eqn:defn.G}, the lefthand side of \eqref{eqn:GG.mult.RR} is
\begin{multline*}
\IR_{\bfs z'=0,\infty} \left(
	\left[ \frac{\prod_{i=1}^p (1-z'_i)^{I_i-i}}{z'_1 \cdots z'_p} P(\bfs z'|\bfs\alpha;\bfs\beta) \cdot \Delta(\bfs z')\,dz'_p \cdots dz'_1 \right] \right) \\
	\times 
\IR_{\bfs z''=0,\infty} \left(
	\left[ \frac{\prod_{j=1}^q (1-z''_j)^{J_j-j}}{z''_1 \cdots z''_q} P(\bfs z''|\bfs\alpha;\bfs\beta) \cdot \Delta(\bfs z'')\,dz''_q \cdots dz''_1 \right]
	\right).
\end{multline*}
After setting $z'_i=z_i$ and $z''_j = z_{p+j}$ the expression above is equal to
\begin{multline}
\label{eqn:RR1}
\IR_{\bfs z=0,\infty} \left(
	\left[ \frac{\prod_{i=1}^p (1-z_i)^{I_i-i}}{z_1 \cdots z_p} P(\bfs z'|\bfs\alpha;\bfs\beta) \cdot \Delta(\bfs z')\right] \right. \\
	\times 
\left.
	\left[ \frac{\prod_{j=1}^q (1-z_{p+j})^{J_j-j}}{z_{p+1} \cdots z_{p+q}} P(\bfs z''|\bfs\alpha;\bfs\beta) \cdot \Delta(\bfs z'') \right]\,dz_{p+q} \cdots dz_{p+1} \,dz_p \cdots dz_1 
	\right)
\end{multline}
where we used Fubini's theorem. Moreover, we observe that
\begin{gather*}
P(\bfs z|\bfs\alpha;\bfs\beta) = P(\bfs z'|\bfs\alpha;\bfs\beta) \cdot P(\bfs z''|\bfs\alpha;\bfs\beta),	\text{~and~} \\
\Delta(\bfs z) = \Delta(\bfs z') \cdot \Delta(\bfs z'') \cdot \prod_{i=1}^p\prod_{j=1}^q \left( 1-\frac{z_{p+j}}{z_i} \right)
\end{gather*}
and hence, the expression in \eqref{eqn:RR1} can be rewritten as
\begin{multline}
\label{eqn:GG.RR2}
\IR_{\bfs z=0,\infty} \left(
\left[
\frac{ \prod_{i=1}^p (1-z_i)^{I_i-i} \prod_{j=1}^q (1-z_{p+j})^{J_j-j-p} }
			{ z_1\cdots z_p \, z_{p+1} \cdots z_{p+q} }	
\right]
\right. \\
\times \left.
\left[
	\prod_{i=1}^p\prod_{j=1}^q 
		\frac{ (1-z_{p+j}) }
				{ \left( 1- \frac{z_{p+j}}{z_i} \right) }
	P(\bfs z | \bfs\alpha;\bfs\beta) \Delta(\bfs z)\right]
	\, dz_{p+q} \cdots dz_{p+1} \,dz_p \cdots dz_1 
	\right).
\end{multline}
Finally, under the transformation $t_r \mapsto 1-z_r$ observe that
\[
\frac{1-t_i}{1-t_i/t_{p+j}} \mapsto \frac{(1-z_{p+j})z_i}{z_i - z_{p+j}} = \frac{1-z_{p+j}}{1-\frac{z_{p+j}}{z_i}}.
\]
and so \eqref{eqn:GG.RR2} becomes
\[
\IR_{\bfs z=0,\infty} \left( 
	f_{I,J}(1-z_1,\ldots,1-z_{p+q}) \frac{\prod_{k=1}^{p+q} (1-z_k)^{-k}}{z_1\cdots z_{p+q}} 
		P(\bfs z|\bfs\alpha;\bfs\beta) \Delta(\bfs z) \, dz_{p+q} \cdots dz_1
	\right)
\]
which is $\RR_{(\bfs z|\bfs\alpha;\bfs\beta)} \left(f_{I,J}(1-z_1,\ldots,1-z_{p+q}) \right)$, as desired.
\end{proof}

\subsection{The case $q=1$}
\label{ss:q.1}

As above, we consider functions $f_{I,J}$ with $I\in \Z^p$ and $J\in\Z^q$. However, in this subsection we restrict to the case $q=1$. Begin by observing that the function $\frac{1-t_i}{1-t_i/t_{p+1}}$ has Laurent expansion 
\[
\sum_{u=0}^\infty (t_i^u/t_{p+1}^u) - \sum_{r=1}^\infty (t_i^r/t_{p+1}^{r-1}).
\]
Hence, we define the finite sum
\[
Q^N_i = { \sum_{u=0}^N  \frac{t_i^u}{t_{p+1}^u} - \sum_{r=1}^N \frac{t_i^r}{t_{p+1}^{r-1}} }.
\]
We need the following technical vanishing lemma.

\begin{lem} 
\label{lem:well.defined.GG}
Let $f\in \Z[t_1^{\pm 1},\ldots,t_{p+1}^{\pm 1}]$ be a Laurent polynomial and set
\begin{gather*}
g(t_1,\ldots,t_{p+1})  =f(t_1,\ldots,t_{p+1}) \prod_{i=1}^p \frac{1-t_i}{1-{t_i}/{t_{p+1}}},
\\
\widetilde{g}_N(t_1,\ldots,t_{p+1})  =f(t_1,\ldots,t_{p+1}) \prod_{i=1}^p Q_{i}^N.
\end{gather*}
Then for large enough $N$ and any choice of $\bfs\alpha$ and $\bfs\beta$, we have
\begin{equation}
\label{ww2}
\RR_{(\bfs{z}|\bfs\alpha;\bfs\beta)}\left(g (1-z_1,\ldots,1-z_{p+1})\right) =
\RR_{(\bfs{z}|\bfs\alpha;\bfs\beta)}\left(\widetilde{g}_N (1-z_1,\ldots,1-z_{p+1})\right).
\end{equation}
\end{lem}

\begin{proof}
Let $t_i=1-z_i$ for $i\in[p+1]$. The difference of the two sides of \eqref{ww2} is
\begin{equation}\label{ww3}
\RR_{(\bfs{z}|\bfs\alpha;\bfs\beta)}
\left(
f(1-z_1,\ldots,1-z_{p+1})
\left( \prod_{i=1}^p \frac{1-t_i}{1-t_i/t_{p+1}} -
\prod_{i=1}^p Q_{i}^N\right)
\right)
\end{equation}
Calculation shows that
\[
\frac{1-t_i}{1-t_i/t_{p+1}}-Q_{i}^N=\frac{1-t_i}{1-t_i/t_{p+1}}\cdot \frac{t_i^{N+1}}{t_{p+1}^{N+1}} \cdot \frac{1-t_{p+1}}{1-t_i}.
\]
Using this we obtain that
\begin{equation}
\label{ww1}
\prod_{i=1}^p \frac{1-t_i}{1-t_i/t_{p+1}} -
\prod_{i=1}^p Q_{i}^N  =
\prod_{i=1}^p \frac{1-z_{p+1}}{1-z_{p+1}/z_i}
\left(
\underbrace{
1-
\prod_{i=1}^p
\left(
1- \frac{(1-z_i)^{N+1}}{(1-z_{p+1})^{N+1}} \frac{z_{p+1}}{z_i}
\right)
}_{\spadesuit}
\right).
\end{equation}
The expression $\spadesuit$ expands to $2^p-1$ terms which can each be written as
\[\left( \frac{z_{p+1}}{ (1-z_{p+1})^{N+1}} \right)^r \psi(z_1,\ldots,z_p)\]
for some function $\psi$ and some integer $r\geq 1$. We obtain that \eqref{ww3} is the sum of $2^p-1$ terms, each of the form
\begin{equation}\label{ww4}
\RR_{(\bfs{z}|\bfs\alpha;\bfs\beta)}
\left( f \cdot
\left(\prod_{i=1}^p \frac{(1-z_{p+1})}{1-z_{p+1}/z_i}\right)
\left( \frac{z_{p+1}}{ (1-z_{p+1})^{N+1}} \right)^r
\psi(z_1,\ldots,z_p)
\right).
\end{equation}
Tracing back the definition of $\RR_{(\bfs{z}|\bfs\alpha;\bfs\beta)}$ and we find that \eqref{ww4} is equal to
\[
\IR_{z_1=0,\infty}\cdots\IR_{z_p=0,\infty}\left[\IR_{z_{p+1}=0,\infty}\left( f\cdot\frac{z_{p+1}^{r-1}\,P(z_{p+1}|\bfs\alpha;\bfs\beta)}{(1-z_{p+1})^{rN+r+1}}\,dz_{p+1}\right)\cdot\phi(z_1,\ldots,z_p)\,dz_p\cdots dz_1\right]
\]
for some function $\phi$ independent of $z_{p+1}$. Now by counting degrees of $z_{p+1}$, we observe that both $\Res_{z_{p+1}=0}$ and $\Res_{z_{p+1}=\infty}$ evaluate to zero for $N\gg0$. In the case that $f$ is an honest \emph{polynomial} in $t_{p+1}$, we remark that $N>\deg(f;t_{p+1})$ suffices.
\end{proof}

\begin{defn}
\label{defn:GG.ratl.defn}
For $f$, $g$, $\widetilde{g}_N$, and $N$ large enough such that Lemma \ref{lem:well.defined.GG} holds, we define
\[
\GG_\bfst\left(
f(t_1,\ldots,t_{p+1})\prod_{i=1}^p \frac{1-t_i}{1-t_i/t_{p+1}}
\right) =
\GG_\bfst\left(g(t_1,\ldots,t_{p+1})\right):=\GG_\bfst\left(\widetilde{g}_N(t_1,\ldots,t_{p+1})\right).
\]
\end{defn}

Now we further consider the case that $f = t^{L,j}$ with $L\in\Z^p$ and $j\leq 0$ in Lemma \ref{lem:well.defined.GG}. The proof of the lemma admits an obvious improvement to show that for any $1\leq k \leq p$ setting
\begin{gather*}
g(t_1,\ldots,t_{p+1}) = t^{L,j} \prod_{i=k}^p \frac{1-t_i}{1-t_i/t_{p+1}} \\
\widetilde{g}_N(t_1,\ldots,t_{p+1}) = t^{L,j} \prod_{i=k}^p Q_i^N
\end{gather*}
satisfies
\begin{equation}
\label{eqn:vanish.lemma.improve}
\RR_{(\bfs{z}|\bfs\alpha;\bfs\beta)}\left(
g(1-z_1,\ldots,1-z_{p+1})
\right)
=
\RR_{(\bfs{z}|\bfs\alpha;\bfs\beta)}\left(
\widetilde{g}_0(1-z_1,\ldots,1-z_{p+1})
\right).
\end{equation}
In other words, in this case $N=0$ suffices for the analogous result of Lemma \ref{lem:well.defined.GG} to hold. As a consequence, we obtain the following generalization of Equation \eqref{eqn:GG.drop.nonpos} to rational arguments of the $\GG_\bfst$ operator.

\begin{lem}
\label{lem:GG.eval.1.ratl}
For $L\in\Z^p$, $j\leq 0$, and $1\leq k \leq p$, we have
\begin{equation}
\label{eqn:GG.eval.1.ratl}
\GG_\bfst\left(
t^{L,j} \prod_{i=k}^p \frac{1-t_i}{1-t_i/t_{p+1}}
\right)
= \GG_\bfst\left(\left.
\left[ t^{L,j} \prod_{i=k}^p \frac{1-t_i}{1-t_i/t_{p+1}} \right]
\right|_{t_{p+1}=1}
\right).
\end{equation}
Therefore the result of both sides is $\GG_\bfst(t^L)$.
\end{lem}

\begin{proof}
Equation \eqref{eqn:vanish.lemma.improve} and Definition \ref{defn:GG.ratl.defn} imply that
\[
\GG_\bfst \left(
t^{L,j} \prod_{i=k}^p \frac{1-t_i}{1-t_i/t_{p+1}}
\right) = \GG_\bfst \left( t^{L,j} \right).
\]
Further, Equation \eqref{eqn:GG.drop.nonpos} implies $\GG_\bfst(t^{L,j}) = \GG_\bfst(t^L)$, and this is the result of the substitution $t_{p+1}\mapsto 1$ on the righthand side of \eqref{eqn:GG.eval.1.ratl}.
\end{proof}

As a further consequence of Lemma \ref{lem:well.defined.GG} we obtain the following multiplication formula which we will use in the sequel to investigate the $K$-Pieri rule.

\begin{thm}
\label{thm:GG.Pieri.mult}
For any partition $\lambda$ of length $p$ and any $n\in\Z_{>0}$ we have
	\begin{equation}
	\label{eqn:GG.Pieri.mult}
		G_\lambda\,G_{(n)} = \GG_{\bfst}
			\left(
			t^{\lambda,n}\prod_{i=1}^p \frac{1-t_i}{1-t_i/t_{p+1}}
			\right).
	\end{equation}
\end{thm}

\begin{proof}
Set $g$ to be the argument of the righthand side of \eqref{eqn:GG.Pieri.mult} and choose $N>n$. Lemma \ref{lem:well.defined.GG} holds for this $g$ and $N$, and so the righthand side is given by Definition \ref{defn:GG.ratl.defn}. We combine this with Equation \eqref{eqn:GG.mult.RR} using $I=\lambda$ and $J=(n)$ to see the desired product.
\end{proof}

\subsection{The case $q>1$}
\label{ss:q.greater.1}

Consider the rational function $f_{I,J}(t_1,\ldots,t_{p+q})$ of Equation \eqref{eqn:f.I.J.defn}. Recall that we wish to define $\GG_\bfst(f_{I,J})$ in such a way that the result of the operation is equal to the product $G_I\,G_J \in \Gamma$. In this subsection, we consider the case that $q=\ell(J)>1$. The main obstruction is the analogue of the vanishing result, Lemma \ref{lem:well.defined.GG}.

We propose the following approach. For any $k\in[p]$ consider the mapping 
\[
\curly{E}^d_k : \Z(t_1,\ldots,t_{p+q}) \longrightarrow \Z(t_1,\ldots,\widehat{t_k},\ldots,t_p,t_{p+1},\ldots,t_{p+q})[t_k^{\pm1}]
\]
which is the composition of sending $f_{I,J}$ to its expansion as a Laurent series about $t_k=0$, followed by truncating the series to have only degree less than or equal to $d$. Observe that every resulting term in the Laurent series will have degree exceeding $I_k$. Thus, truncating to terms with degree \emph{less than} $d$ indeed results in a Laurent \emph{polynomial} in $\Z(t_1,\ldots,\widehat{t_k},\ldots,t_p,t_{p+1},\ldots,t_{p+q})[t_k^{\pm1}]$.

For $d>I_k+N$, a computation shows that the coefficient of $t_k^{I_k + N}$ in $\curly{E}^{d}_k (f_{I,J})$ is
\begin{multline}
\label{eqn:coeff.t.k}
\widetilde{h}_k^N(t_1,\ldots,\widehat{t_k},\ldots,t_p,t_{p+1},\ldots,t_{p+q}) \\
= \left(t^{\hat{I},J} \prod_{i\in[p]\setminus\{k\}} \prod_{j\in q} \frac{1-t_i}{1-t_i/t_{p+j}}\right)
\cdot\sum_{(N_1,\ldots,N_q)\vdash N} \left(\prod_{j=1}^q \frac{(1-t_{p+j})^{1-\delta(N_j,0)}}{t_j^{N_j}} \right)
\end{multline}
where $\hat{I}$ is the integer sequence with $\hat{I}_i=I_i$ for all $i\neq k$ but $\hat{I}_k = 0$, $(N_1,\ldots,N_q)\vdash N$ denotes that $\sum_{j=1}^q N_j = N$ with $N_j\geq 0$, and $\delta$ is the Kronecker delta function. The following conjecture has been confirmed with many computer experiments.

\begin{conj}
\label{conj:RR.general.vanish}
Let $N>\max(J)$ and $\bfs{z}=(z_1,\ldots,z_{p+q})$. For every choice of $\bfs\alpha$ and $\bfs\beta$, we have
\[
\RR_{(\bfs{z}|\bfs\alpha;\bfs\beta)}\left( \widetilde{h}_k^N(1-z_1,\ldots,\widehat{1-z_k},\ldots,1-z_p,1-z_{p+1},\ldots,1-z_{p+q}) \right) = 0.
\]
\end{conj}

Iterating \eqref{eqn:coeff.t.k} leads to the realization that the composition $\curly{E}_1^{d_1}\curly{E}_2^{d_2}\cdots \curly{E}_p^{d_p}$ results in a Laurent \emph{polynomial} in $\Z[t_1^{\pm1},\ldots,t_{p+q}^{\pm1}]$ for any sequence of integers $(d_i)$; i.e.~the first product in \eqref{eqn:coeff.t.k} will be empty.

\begin{defn}
\label{defn:GG.ratl.q.greater.1}
Supposing the truth of Conjecture \ref{conj:RR.general.vanish} we define
\[
\GG_\bfst(f_{I,J}) := \GG_\bfst\left(\curly{E}_1^{d_1}\curly{E}_2^{d_2}\cdots \curly{E}_p^{d_p} (f_{I,J}) \right)
\]
provided that each $d_i$ exceeds $I_i+\max(J)$.
\end{defn}

\section{The operation $\SSS_\bfst$ on Laurent polynomials}
\label{s:SS.Lpoly}

We now make a short aside into the theory of Schur functions. To each partition $\lambda$, there is an associated Schur function $s_\lambda$. The Schur functions form a basis of the ring of symmetric functions $\Lambda$. Moreover, they satisfy the Jacobi--Trudi formula
\begin{equation}
\label{eqn:Jacobi.Trudi}
s_\lambda = \det\left( h_{\lambda_i + j - i} \right)
\end{equation} 
where $h_d$ denotes the \emph{complete homogeneous symmetric function} of degree $d$ \cite[Ch.\ I, (3.4)]{im1995}. The Schur functions can be evaluated on a single set of variables $\bfs{x} = \{x_1,x_2,\ldots\}$ for which we write the Schur function $s_\lambda(\bfs x)$. In this case, the polynomials $h_d$ are formally defined by the generating function
\[
\sum_{d\geq 0} h_d(\bfs x) u^d = \frac{1}{\prod_{i\geq 1} (1- x_i u)}.
\]
There is also a \emph{supersymmetric Schur function} $s_\lambda(\bfs x;\bfs y)$ separately symmetric in the families $\bfs x$ and $\bfs y = \{y_1,y_2,\ldots\}$. In this case, the polynomials $h_d(\bfs x;\bfs y)$ are defined by the generating function
\[
\sum_{d\geq 0} h_d(\bfs x;\bfs y) u^d = \frac{\prod_{j\geq 1} (1 + y_j u)}{\prod_{i\geq 1} (1 - x_i u) }.
\]

\begin{remark}
For any partition $\lambda$, the lowest degree homogeneous part of $G_\lambda(\bfs\alpha;\bfs\beta)$ (of degree $|\lambda|$) is $s_\lambda(\bfs x;\bfs y)$ after making the substitutions $x_i = 1-\alpha_i^{-1}$ and $y_j = 1-\beta_j$, see e.g.\ \cite{ab2002.klr}. In the sequel, we will abuse notation by writing $G_\lambda(\bfs{x};\bfs{y})$ for the polynomial $G_\lambda(\{1/(1-x_i)\};\{1-y_j\})$.
\end{remark}

The Jacobi--Trudi determinant \eqref{eqn:Jacobi.Trudi} can be adapted to define a Schur function $s_I$ for any finite integer sequence $I$ by
\[
s_I := \det\left( h_{I_i + j - i} \right).
\]
Thus, by interchanging rows in the determinant, we obtain the straightening law, valid for any finite integer sequences $I$ and $J$ and for any integers $a$ and $b$,
\[
s_{I,a,b,J} = -s_{I,b-1,a+1,J}.
\]
Thus, $s_I = \pm s_\nu$ for some partition $\nu$, or is zero. Observe that the Schur straightening law can also be viewed as a consequence of that for Grothendieck polynomials by taking the lowest degree part on each side of Equation \eqref{eqn:G.Straighten}.

\begin{defn}
\label{defn:CurlyS}
For any monomial $t^I$ we define
\[
\SSS_\bfst\left( t^I \right) = s_I
\]
and extend the operation linearly to obtain a $\Z$-module mapping $\SSS_\bfst:\Z[\bfst^{\pm1}] \to \Lambda$. %If two Laurent polynomials $f_1$ and $f_2$ have the property that $\SSS_\bfst\left(f_1-f_2\right) = 0 \in \Lambda$, then we say that $f_1$ is \emph{$\SSS_\bfst$-equivalent} to $f_2$.
\end{defn}

\begin{remark}
\label{rem:low.deg.of.GG}
The observation regarding lowest degree terms of $G_\lambda$ for partitions $\lambda$ generalizes to $G_I$ with $I$ any finite integer sequence as follows. First write $G_I = \sum_\lambda d^I_\lambda G_\lambda$ using the straightening laws. Observe that as a consequence of the straightening laws, for any $J$ we have $d^J_\lambda \neq 0$ only if $|\lambda|\geq |J|$. If it happens that $d_\lambda^I \neq 0$ only if $|\lambda|$ is strictly more than $|I|$, then $\SSS_\bfst(t^I) = 0$. Otherwise the lowest degree homogeneous term of $G_I$ is $s_I$. More succinctly, the degree $|I|$ part of $G_I(\bfs x;\bfs y)$ is always $s_I(\bfs x;\bfs y)$, which may be zero, although $G_I(\bfs x;\bfs y)$ is never itself zero.
\end{remark}
 
\section{Pieri rules}
\label{s:K.Pieri.rule}

We wish to investigate the multiplication $G_\lambda \cdot G_{(n)}$ completely in terms of iterated residue operations. In particular, we will explore interpretations of the following Theorems \ref{thm:K.Pieri.rule} and \ref{thm:H.Pieri.rule}.

\begin{thm}[Lenart \cite{cl2000}]
\label{thm:K.Pieri.rule}
There exist unique \underline{positive} integers $c_{\lambda,n}^\mu$ such that
\[
G_{\lambda}\cdot G_{(n)} = \sum_\mu (-1)^{|\mu|-|\lambda|-n}\,c_{\lambda,n}^\mu G_\mu
\]
where the sum is over partitions $\mu$ satisfying certain combinatorial conditions. Furthermore, only \underline{finitely} many of the integers $c_{\lambda,n}^\mu$ are nonzero.
\end{thm}

The above result is called the \emph{$K$-Pieri rule}. We choose to emphasize the \emph{positivity} and \emph{finiteness} aspects of Lenart's original paper \cite{cl2000}. We remark that in that work, a complete combinatorial description of the coefficients is given. In short, the number $c_{\lambda,n}^\mu$ is nonzero only if $\mu$ can be obtained from $\lambda$ by adding a horizontal strip and in that case is equal to an explicitly described binomial coefficient. For details, see \cite[Theorem~3.2]{cl2000}.

We will also give a combinatorial translation for our iterated residue results, Theorem \ref{thm:GG.K.Pieri.comb}, comparable to a combinatorial formulation of the \emph{cohomological} Pieri rule, stated below (see e.g.~\cite[Ch.\ I, (5.16)]{im1995}).

%\begin{defn}[see e.g.\ \cite{im1995}, Ch.\ I, $\S$1]
%\label{defn:hor.n.strip}
%Given a partition $\mu\supset\lambda$, the \emph{skew diagram} $\mu/\lambda$ is the Young diagram of boxes formed by removing the shape $\lambda$ from the northwest corner of the shape $\mu$. The skew diagram $\mu/\lambda$ is called a \emph{horizontal $n$-strip} if its weight $|\mu/\lambda|:=|\mu|-|\lambda| = n$ and no two boxes of $\mu/\lambda$ are in the same column.
%\end{defn}

\begin{thm}(Cohomological Pieri rule)
\label{thm:H.Pieri.rule}
For $\lambda$ and $n$ as above,
\[
s_\lambda\cdot s_{(n)} = \sum_{\mu} s_\mu
\]
where the sum is over partitions $\mu$ such that $\mu/\lambda$ is a horizontal $n$-strip.
\end{thm}

We will prove Theorem \ref{thm:K.Pieri.rule} in Sections \ref{s:K.Pieri.finiteness} and \ref{s:K.Pieri.positivity} using iterated residue methods and interpret our results in the combinatorial context of Theorem \ref{thm:H.Pieri.rule} in Section \ref{s:translate.comb}.

\section{An iterated residue proof of finiteness in the $K$-Pieri rule}
\label{s:K.Pieri.finiteness}

Throughout the rest of the paper, fix positive integers $p$ and $n$; moreover, fix a partition $\lambda$ of length $p$. Let $\Comp_{p+1}(n)$ denote the set of weak compositions of $n$ into $p+1$ parts; that is, 
\[ 
	\Comp_{p+1}(n) = 
		\{ I \in \N^{p+1} : |I|=n \}.
\]
Observe that the set $\Comp_{p+1}(n)$ can be stratified by lengths of its elements. Indeed, we set $\Comp_{p+1}^k(n) = \{ I \in \Comp_{p+1}(n) : \ell(I) = k \}$ so that $\Comp_{p+1}(n) = \DisjointUnion_{k\in[p+1]} \Comp_{p+1}^k(n)$.

\begin{defn}
\label{defn:T.polys}
Given $I\in\Comp_{p+1}^k(n)$ and a vector $\epsilon \in \{0,1\}^{k-1}$ define the polynomial
\begin{equation}
\label{eqn:A.lambda.I.epsilon}
\T{\lambda}{I}{\epsilon} := t^{\lambda+I}\,\prod_{i=1}^{k-1} (1-t_i)^{\epsilon_i}.
\end{equation}
When $\epsilon = (1,\ldots,1)$ we will simply write $\T{\lambda}{I}{}$ as a shorthand. 
\end{defn}

Already, Definition \ref{defn:GG.ratl.defn} and Theorem \ref{thm:GG.Pieri.mult} prove the \emph{finiteness} assertion of Theorem \ref{thm:K.Pieri.rule}. However, the following result explicitly identifies the finitely many terms we need to keep from the expansion of $\widetilde{g}_{n+1} = t^{\lambda,n} \prod_{i=1}^{p} Q_i^{n+1}$.

\begin{thm}
\label{thm:GG.finiteness}
For a partition $\lambda$ of length $p$ and positive integer $n$, we have
\[
G_\lambda\cdot G_{(n)} = \GG_\bfst \left( t^{\lambda,n} \, \prod_{i=1}^p \frac{1-t_i}{1-t_i/t_{p+1}}\right)
	= \GG_\bfst \left(\sum_{I\in\Comp_{p+1}(n)} \T{\lambda}{I}{} \right).
\]
\end{thm}

\begin{proof}
We will expand the terms $1/(1-t_i/t_{p+1})$ in the rational function above in  order of increasing $i$ and repeatedly apply Equation \eqref{eqn:GG.eval.1.ratl} whenever the exponents on $t_{p+1}$ become non-positive. Set
\[
g_{r} = \prod_{i=r}^p \frac{1-t_i}{1-t_i/t_{p+1}}.
\]
First, we expand
\[
t^{\lambda,n}\,g_1
	= \mathop{ \underbrace{ \mathop{\left( \sum_{u=0}^{n-1} t_1^u \cdot t^{\lambda,n} \cdot t_{p+1}^{-u} \right)}}}_{f_1}(1-t_1)\, g_2 + t_1^{n} \cdot t^{\lambda,n} \cdot t_{p+1}^{-n} \cdot g_1.
\]
The last term above, once we apply \eqref{eqn:GG.eval.1.ratl} is $\GG_\bfst$-equivalent to $t_1^n t^{\lambda} = \T{\lambda}{(n,0,\ldots,0)}{}$ and corresponds to the unique element of $\Comp_{p+1}^1(n)$. Now choose one of the terms in the  parenthetical summation $f_1$, say corresponding to $u=I_1$ for some $I_1\in\{0,\ldots,n-1\}$. We expand
\[
t_1^{I_1} t^{\lambda,n} t_{p+1}^{-I_1}\,(1-t_1)\, g_2 = 
	\mathop{\underbrace{\mathop{\left( \sum_{u=0}^{n-I_1-1} t_1^{I_1}t_2^u t^{\lambda,n} t_{p+1}^{-I_1-u} \right)}}}_{f_2}
	(1-t_1) (1-t_2)\, g_3 + t_1^{I_1}t_2^{n-I_1} t^{\lambda,n} t_{p+1}^{-n}(1-t_1)g_2
\]
where the last term, again applying \eqref{eqn:GG.eval.1.ratl}, is $\GG_\bfst$-equivalent to $t_1^{I_1}t_2^{n-I_1} t^{\lambda}(1-t_1)$ which is $\T{\lambda}{I}{}$ with $I=(I_1,I_2, 0,\ldots,0)$ and $I_2 = n-I_1$. In this way we can obtain each $\T{\lambda}{I}{}$ with $I\in\Comp_{p+1}^2(n)$, since $I_1<n$. 

Similarly applying the analogous procedure to each term of the summation $f_2$, we obtain the polynomials $\T{\lambda}{I}{}$ with $I\in\Comp_{p+1}^3(n)$. We can continue this procedure until we have expansions of the form
\begin{multline*}
\prod_{i=1}^{p-1}t_i^{I_i} \cdot t^{\lambda,n} \cdot t_{p+1}^{-\sum_{i=1}^{p-1} I_i} \cdot \prod_{i=1}^{p-1}(1-t_i) \cdot g_p= \\
	\mathop{\underbrace{\mathop{\left( \sum_{u=0}^{n-(\sum_{i=1}^{p-1} I_i) - 1} \prod_{i=1}^{p-1}t_i^{I_i} \cdot t_p^u\cdot t^{\lambda,n} \cdot t_{p+1}^{-(\sum_{i=1}^{p-1} I_i) - u} \right)}}}_{f_{p}} \prod_{i=1}^{p}(1-t_i) \\
	+ \prod_{i=1}^{p-1} t_i^{I_i} \cdot t_p^{n-(\sum_{i=1}^{p-1} I_i)} \cdot t^{\lambda,n} \cdot t_{p+1}^{-n}\cdot \prod_{i=1}^{p-1}(1-t_i)\cdot g_p.
\end{multline*}
As before, we can apply \eqref{eqn:GG.eval.1.ratl} to the last displayed line to obtain the polynomial $\T{\lambda}{I}{}$ for $I = (I_1,\ldots,I_p,0) \in \Comp^p_{p+1}(n)$. Finally, each of the terms in the summation $f_{p}$ is already a polynomial, and when multiplied by the common factor $\prod_{i=1}^p(1-t_i)$, is of the form $\T{\lambda}{I}{}$ with $I=(I_1,\ldots,I_p,I_{p+1}) \in \Comp_{p+1}^{p+1}(n)$. Explicitly we take $u=I_p$ and thus $I_{p+1} = n-\sum_{i=1}^{p} I_i = n - \sum_{i=1}^{p-1} I_i -u > 0$, so indeed $\ell(I)=p+1$.

We observe that there are no more rational functions to expand as Laurent series, and each polynomial term we have produced has the form $\T{\lambda}{I}{}$. Conversely, it is clear from our description that for any $I\in\Comp_{p+1}(n)$, we can follow this procedure to obtain $\T{\lambda}{I}{}$ by successively taking terms (using $u$ as a dummy variable as above) corresponding to $u=I_1$ in $f_1$, $u=I_2$ in $f_2$, \emph{et cetera}, until we take $u=I_{p}$ in $f_p$.
\end{proof}

\section{An iterated residue proof of positivity in the $K$-Pieri rule}
\label{s:K.Pieri.positivity}

\subsection{Preliminary results and definitions}
\label{ss:K.Pieri.positivity.prelim}

In this subsection we present the technical definitions, lemmas, and propositions which allow us to organize a cancellation of the non-sorted terms on the righthand side of Theorem \ref{thm:GG.finiteness}. 

\begin{defn}
\label{defn:sorted.polys}
Given a finite integer sequence $I$ with only non-negative parts and positive integer $m$, the monomial $t^I$ is called \emph{$m$-sorted} if $I_m\geq I_{m+1}$. Recall that if $k>\ell(I)$ then $I_k=0$, so $t^I$ is trivially $m$-sorted for all $m\geq\ell(I)$. We say that $t^I$ is \emph{sorted} if it is $m$-sorted for all positive integers $m$. Observe that $t^I$ is sorted if and only if $I$ is a partition. A polynomial in the variables $t$ is $m$-sorted (respectively sorted) if every monomial in its expansion is $m$-sorted (resp.~sorted).
\end{defn}

The following Proposition \ref{prop:when.sorted} makes the observation of exactly when the polynomials $\T{\lambda}{I}{\epsilon}$ are $m$-sorted. Furthermore, Lemma \ref{lem:m.bound} establishes that $\T{\lambda}{I}{\epsilon}$ can fail to be $m$-sorted for only finitely many $m$.

\begin{prop}
\label{prop:when.sorted}
When $\epsilon_{m+1} = 0$, the polynomial $\T{\lambda}{I}{\epsilon}$ is $m$-sorted if and only if $\lambda_m+I_m \geq \lambda_{m+1}+I_{m+1}$. When $\epsilon_{m+1} = 1$, the polynomial $\T{\lambda}{I}{\epsilon}$ is $m$-sorted if and only if $\lambda_m + I_m > \lambda_{m+1}+I_{m+1}$.
\end{prop}

\begin{proof}
When $\epsilon_{m+1} = 0$, the exponent of $t_{m+1}$ in every monomial term of $\T{\lambda}{I}{\epsilon}$ is $\lambda_{m+1}+I_{m+1}$. In a given term, the exponent on $t_{m}$ is $\lambda_m+I_m + \epsilon_m \geq \lambda_m+I_m$. Hence, for either choice of $\epsilon_m \in \{0,1\}$ a monomial will be $m$-sorted exactly when $\lambda_m+I_m \geq \lambda_{m+1}+I_{m+1}$.

On the other hand, for $\epsilon_{m+1} = 1$, terms with $E_m = \lambda_m + I_m$ as the exponent on $t_m$ and $E_{m+1} = \lambda_{m+1}+I_{m+1} + 1$ as the exponent on $t_{m+1}$ will appear in the expansion of $\T{\lambda}{I}{\epsilon}$. Hence, the strict inequality is necessary for the claim to hold since $\lambda_m + I_m = \lambda_{m+1}+I_{m+1}$ contradicts $E_m \geq E_{m+1}$.
\end{proof}

\begin{lem}
\label{lem:m.bound}
If $\T{\lambda}{I}{\epsilon}$ is not $m$-sorted, then $m<\ell(I)$.
\end{lem}

\begin{proof}
Suppose $m\geq \ell(I)$ and $\T{\lambda}{I}{\epsilon}$ is not $m$-sorted. By definition we have that $\epsilon_{m} = \epsilon_{m+1} = 0$ and $I_{m+1} = 0$. Thus every monomial term in the expansion of $\T{\lambda}{I}{\epsilon}$ has the exponents $E_m = \lambda_m + I_m$ and $E_{m+1} = \lambda_{m+1}$ respectively on the $t_m$ and $t_{m+1}$ variables. If this monomial is not $m$-sorted, then $E_m<E_{m+1}$, which is impossible since $\lambda$ is a partition (i.e.~$\lambda_m\geq\lambda_{m+1}$) and $I_m\geq 0$.
\end{proof}

The following definition characterizes the integer sequences $I\in\Comp_{p+1}(n)$ for which $\T{\lambda}{I}{}$ is as far from being $m$-sorted as possible.

\begin{defn}
\label{defn:Bad}
Define the set
\begin{equation}
\label{eqn:Bad}
\Bad_{k,m} = \{ I \in \Comp_{p+1}^k(n) : I_m = 0 \text{~and $\T{\lambda}{I}{}$ is not $m$-sorted} \}.
\end{equation}
\end{defn}

\begin{remark}
\label{rem:Bad.empty}
Lemma \ref{lem:m.bound} ensures that $\Bad_{k,m}$ is empty whenever $m\geq k$. Depending on $\lambda$, the set $\Bad_{k,m}$ may or may not be empty for other values of the parameters. However, observe that if $\T{\lambda}{I}{\epsilon}$ is not $m$-sorted and $I_m\neq 0$, then for any integer sequence $I'$ with $I'_i = I_i$ for $i\neq m,m+1$ and $I'_m<I_m$, the polynomial $\T{\lambda}{I'}{\epsilon}$ is also not $m$-sorted. Thus the condition $I_m=0$ in $\Bad_{k,m}$ indeed forces its elements to be as ``bad'' as possible in terms of how far the corresponding $\T{\lambda}{I}{}$ are from being $m$-sorted. On the other hand, this observation implies that if there exists a length $k$ sequence $J$ such that $\T{\lambda}{J}{}$ is not $m$-sorted, then the set $\Bad_{k,m}$ is non-empty.
\end{remark}

\begin{defn}
\label{defn:Seg.Max}
For $I\in \Bad_{k,m}$, define the set
\begin{equation}
\label{eqn:Seg}
\Seg_m(I) = \left\{ J \in \Comp_{p+1}^k(n) :
	\begin{array}{c} 
		J_i = I_i \text{~for all~} i\neq m,m+1 \\
		\text{and} \\
	 	\lambda_m - \lambda_{m+1} \leq J_{m+1} \leq I_{m+1}
	\end{array}
	\right\}.
\end{equation}
When additionally $k>m+1$ or $\lambda_m > \lambda_{m+1}$ define the sequence
\begin{equation}
\label{eqn:Max}
\Max_m(I) = \\ (I_1,\ldots,I_{m-1},
	I_{m+1} - (\lambda_m-\lambda_{m+1}) , \lambda_m - \lambda_{m+1},
	 I_{m+2},\ldots).
\end{equation}
\end{defn}

We note that the condition $I\in\Bad_{k,m}$ guarantees that $I_{m+1} -(\lambda_m - \lambda_{m+1})\geq 0$, so that $\Max_m(I)$ has only non-negative parts. Moreover, notice that $|\Max_m(I)| = |I|$ and hence $\Max_m(I) \in \Comp_{p+1}(n)$. In fact, $\Max_m(I)\in\Seg_m(I)$; note the importance of the condition $k>m+1$ or $\lambda_m>\lambda_{m+1}$ to make this true. Moreover, $\Max_m(I)$ is indeed a ``maximal'' element of $\Seg_m(I)$ in the sense that its $m$-th entry is as large as possible given the other constraints. The effect of doing $\lambda+J$ as $J$ ranges through $\Seg_m(I)$ is depicted in Figures \ref{fig:Seg.generic} and \ref{fig:Seg.special}.

\begin{figure}
\begin{tabular}{ccccc}
\begin{tikzpicture}[scale=0.35]
\draw (0,2) -- (0,0) -- (1,0) -- (1,1) -- (2,1) -- (2,2);
\draw (0,2) rectangle (2,3);
\draw (0,3) rectangle (3,4);
\draw (0,4) -- (0,6) -- (4,6) -- (4,5) -- (3,5) -- (3,4);
\node (lambdam) at (1.5,3.5) {{\tiny$\lambda_m$}};
\node (lambdam+1) at (1,2.5) {{\tiny$\lambda_{m+1}$}};

\filldraw[fill=cyan!30, draw=black] (4,6) -- (6,6) -- (6,5) -- (7,5) -- (7,4) -- (3,4) -- (3,5) -- (4,5) -- cycle;
\filldraw[fill=cyan!30, draw=black] (2,1) rectangle (4,2);

\filldraw[fill=red!30, draw=black] (3,3) rectangle (3,4);
\filldraw[fill=red!30, draw=black] (2,2) rectangle (7,3);
%\node (Jm) at (??,3.5) {\tiny$?$};
\node (Jm+1) at (4.5,2.5) {\tiny$I_{m+1}$};
\end{tikzpicture}
&,
&
\begin{tikzpicture}[scale=0.35]
\draw (0,2) -- (0,0) -- (1,0) -- (1,1) -- (2,1) -- (2,2);
\draw (0,2) rectangle (2,3);
\draw (0,3) rectangle (3,4);
\draw (0,4) -- (0,6) -- (4,6) -- (4,5) -- (3,5) -- (3,4);
\node (lambdam) at (1.5,3.5) {{\tiny$\lambda_m$}};
\node (lambdam+1) at (1,2.5) {{\tiny$\lambda_{m+1}$}};

\filldraw[fill=cyan!30, draw=black] (4,6) -- (6,6) -- (6,5) -- (7,5) -- (7,4) -- (3,4) -- (3,5) -- (4,5) -- cycle;
\filldraw[fill=cyan!30, draw=black] (2,1) rectangle (4,2);

\filldraw[fill=red!30, draw=black] (3,3) rectangle (4,4);
\filldraw[fill=red!30, draw=black] (2,2) rectangle (6,3);
\node (Jm) at (3.5,3.5) {\tiny$1$};
\node (Jm+1) at (4,2.5) {\tiny$I_{m+1}-1$};
\end{tikzpicture}
&$,\cdots,$
&
\begin{tikzpicture}[scale=0.35]
\draw (0,2) -- (0,0) -- (1,0) -- (1,1) -- (2,1) -- (2,2);
\draw (0,2) rectangle (2,3);
\draw (0,3) rectangle (3,4);
\draw (0,4) -- (0,6) -- (4,6) -- (4,5) -- (3,5) -- (3,4);
\node (lambdam) at (1.5,3.5) {{\tiny$\lambda_m$}};
\node (lambdam+1) at (1,2.5) {{\tiny$\lambda_{m+1}$}};

\filldraw[fill=cyan!30, draw=black] (4,6) -- (6,6) -- (6,5) -- (7,5) -- (7,4) -- (3,4) -- (3,5) -- (4,5) -- cycle;
\filldraw[fill=cyan!30, draw=black] (2,1) rectangle (4,2);

\filldraw[fill=red!30, draw=black] (3,3) rectangle (7,4);
\filldraw[fill=red!30, draw=black] (2,2) rectangle (3,3);
\node (Jm) at (10,5.5) {\tiny$I_{m+1} - (\lambda_m - \lambda_{m+1})$};
\node (Jm+1) at (7,1.5) {\tiny$\lambda_m - \lambda_{m+1}$};

\draw (Jm) -- (5,3.5);
\draw (Jm+1) -- (2.5,2.5);
\end{tikzpicture}
\end{tabular}
\caption{Diagrams of $\lambda + J$ for $J\in\Seg_m(I)$ with $I\in\Bad_{k,m}$. In this example, $k>m+1$ and $\lambda_m - \lambda_{m+1}>0$ so the diagram on the far left represents $\lambda + I$ while the diagram on the far right is $\lambda + \Max_m(I)$. The Young diagram for $\lambda$ is unshaded. The blue shading represents the addition of $I_i$ for $i\neq m,m+1$, and the red shading represents the different values of $J_m$ and $J_{m+1}$ as $J$ ranges through $\Seg_m(I)$.}
\label{fig:Seg.generic}
\end{figure}
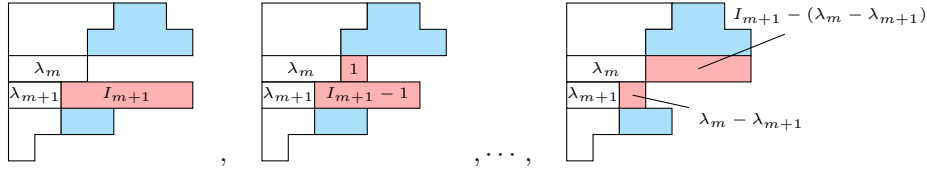

%%%%%%%%%%%%%%%%%%%%%%%%%%%%%%%%%%%%%%%%%%%%%%%%%%%%%%%%%%%%%%%%%%%%%%%%%%%%%%%%%%%%%%

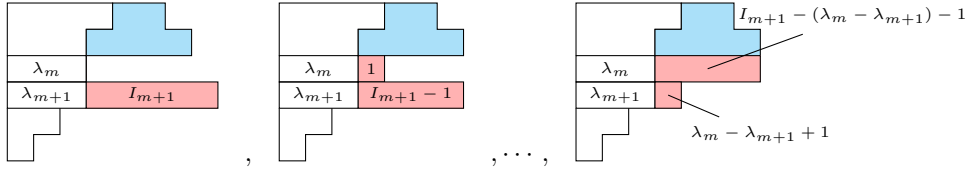
\begin{figure}
\begin{tabular}{ccccc}
\begin{tikzpicture}[scale=0.35]
\draw (0,2) -- (0,0) -- (1,0) -- (1,1) -- (2,1) -- (2,2);
\draw (0,2) rectangle (3,3);%\lambda_{m+1}
\draw (0,3) rectangle (3,4);%\lambda_m
\draw (0,4) -- (0,6) -- (4,6) -- (4,5) -- (3,5) -- (3,4);
\node (lambdam) at (1.5,3.5) {{\tiny$\lambda_m$}};
\node (lambdam+1) at (1.5,2.5) {{\tiny$\lambda_{m+1}$}};

\filldraw[fill=cyan!30, draw=black] (4,6) -- (6,6) -- (6,5) -- (7,5) -- (7,4) -- (3,4) -- (3,5) -- (4,5) -- cycle;

\filldraw[fill=red!30, draw=black] (3,3) rectangle (3,4);
\filldraw[fill=red!30, draw=black] (3,2) rectangle (8,3);
%\node (Jm) at (??,3.5) {\tiny$?$};
\node (Jm+1) at (5.5,2.5) {\tiny$I_{m+1}$};
\end{tikzpicture}
&,
&
\begin{tikzpicture}[scale=0.35]
\draw (0,2) -- (0,0) -- (1,0) -- (1,1) -- (2,1) -- (2,2);
\draw (0,2) rectangle (3,3);%\lambda_{m+1}
\draw (0,3) rectangle (3,4);%\lambda_m
\draw (0,4) -- (0,6) -- (4,6) -- (4,5) -- (3,5) -- (3,4);
\node (lambdam) at (1.5,3.5) {{\tiny$\lambda_m$}};
\node (lambdam+1) at (1.5,2.5) {{\tiny$\lambda_{m+1}$}};

\filldraw[fill=cyan!30, draw=black] (4,6) -- (6,6) -- (6,5) -- (7,5) -- (7,4) -- (3,4) -- (3,5) -- (4,5) -- cycle;

\filldraw[fill=red!30, draw=black] (3,3) rectangle (4,4);
\filldraw[fill=red!30, draw=black] (3,2) rectangle (7,3);
\node (Jm) at (3.5,3.5) {\tiny$1$};
\node (Jm+1) at (5,2.5) {\tiny$I_{m+1}-1$};
\end{tikzpicture}
&$,\cdots,$
&
\begin{tikzpicture}[scale=0.35]
\draw (0,2) -- (0,0) -- (1,0) -- (1,1) -- (2,1) -- (2,2);
\draw (0,2) rectangle (3,3);%\lambda_{m+1}
\draw (0,3) rectangle (3,4);%\lambda_m
\draw (0,4) -- (0,6) -- (4,6) -- (4,5) -- (3,5) -- (3,4);
\node (lambdam) at (1.5,3.5) {{\tiny$\lambda_m$}};
\node (lambdam+1) at (1.5,2.5) {{\tiny$\lambda_{m+1}$}};

\filldraw[fill=cyan!30, draw=black] (4,6) -- (6,6) -- (6,5) -- (7,5) -- (7,4) -- (3,4) -- (3,5) -- (4,5) -- cycle;

\filldraw[fill=red!30, draw=black] (3,3) rectangle (7,4);
\filldraw[fill=red!30, draw=black] (3,2) rectangle (4,3);
\node (Jm) at (10.5,5.5) {\tiny$I_{m+1} - (\lambda_m - \lambda_{m+1})-1$};
\node (Jm+1) at (7,1) {\tiny$\lambda_m - \lambda_{m+1}+1$};

\draw (Jm) -- (5,3.5);
\draw (Jm+1) -- (3.5,2.5);
\end{tikzpicture}

\end{tabular}
\caption{Diagrams of $\lambda + J$ for $J\in\Seg_m(I)$ with $I\in\Bad_{k,m}$, $k=m+1$, and $\lambda_m - \lambda_{m+1}=0$ so the diagram on the far left still represents $\lambda + I$, but $\Max_m(I)$ is not defined (it would have length less than $I$). However, setting $\Max_m(I)$ to be the formula in Equation \eqref{eqn:Max}, the far right diagram depicts $\lambda+J$ for $J = \Max_m(I) + (0,\ldots,0,-1,1,0,\ldots,0)$, where the $-1$ and $1$ appear in the $m$-th and $(m+1)$-st components of the added vector.}
\label{fig:Seg.special}
\end{figure}

\subsection{Organization of cancellations among the polynomials $\T{\lambda}{I}{\epsilon}$}
\label{ss:K.Pieri.positivity.organize.cancellation}

\begin{defn}
\label{defn:cancel.segments}
Let $a\leq b$ be integers and let $i$ be a positive integer. Set
\begin{equation}
\label{eqn:sigma}
\sigma_i(a,b) = \sum_{k=0}^{b-a} t_i^{a+k}t_{i+1}^{b-k} = t_i^{a}t_{i+1}^{b} + t_i^{a+1}t_{i+1}^{b-1} + \cdots + t_i^{b}t_{i+1}^{a}.
\end{equation}
If $a$ is strictly less than $b$, then we also define
\begin{equation}
\label{eqn:sigma.tilde}
\tilde{\sigma}_i(a,b) = \sum_{k=0}^{b-a-1} t_i^{a+k}t_{i+1}^{b-k} = t_i^{a}t_{i+1}^{b} + t_i^{a+1}t_{i+1}^{b-1} + \cdots + t_i^{b-1}t_{i+1}^{a+1}.
\end{equation}
A \emph{cancelling segment (of Type A, B, or C) on $t_i$ and $t_{i+1}$} is a polynomial with one of the following forms
	\begin{enumerate}[leftmargin=*,label=\underline{Type \Alph*.}]
	\item $f(t)\,\tilde{\sigma}_i(a,b)\,(1-t_i)$
	\item $f(t)\,\sigma_i(a,b)\,(1-t_i)$
	\item $f(t)\,\sigma_i(a,b)\,(1-t_i)(1-t_{i+1})$
	\end{enumerate}
where $f(t)$ is allowed to be any polynomial which is independent of $t_i$ and $t_{i+1}$.
\end{defn}

The importance of cancelling segments is the following proposition.

\begin{prop}
\label{prop:cancel.segments}
In the notation of Definition \ref{defn:cancel.segments}:
	\begin{enumerate}[leftmargin=*,label=(\Alph*)]
	\item $f(t)\,\tilde{\sigma}_i(a,b)\,(1-t_i)$ is $\GG_\bfst$-equivalent to $0$;
	\item $f(t)\,\sigma_i(a,b)\,(1-t_i)$ is $\GG_\bfst$-equivalent to $f(t)\,t_i^b t_{i+1}^a (1-t_i)$;
	\item $f(t)\,\sigma_i(a,b)\,(1-t_i)(1-t_{i+1})$ is also $\GG_\bfst$-equivalent to $f(t)\, t_i^b t_{i+1}^a (1-t_i)$.
	\end{enumerate}
\end{prop}

\begin{proof}
We note that (A) immediately implies (B) and moreover, the expression in (C) can be rewritten as
\[
f\,\sigma_i(a,b)\,(1-t_i)(1-t_{i+1}) = f\,\sigma_i(a,b)(1-t_i) - f\,\tilde{\sigma}_i(a,b+1)(1-t_i)
\]
and so using (B) on the first term and (A) on the second term yields the result. Hence, we need to prove (A). To simplify notation, it suffices to consider the case when $f=1$ and $i=1$. The straightening law \eqref{eqn:GG.Straighten} implies that $t_1^a t_2^b(1-t_1)$ is $\GG_\bfst$-equivalent to $- t_1^{b-1}t_2^{a+1}(1-t_1)$, and hence the first and last terms in $\tilde\sigma$ cancel. One notes also that the second and second-to-last terms similarly cancel, \emph{et cetera}. Hence all the terms cancel in pairs, except if $\tilde{\sigma}$ has an odd number of terms (occurring whenever $b-a$ is odd). In this case, the uncanceled term in the middle is necessarily of the form $t_1^{a+r}t_2^{a+r+1}(1-t_1)$ where $r=(b-a-1)/2$ and \eqref{eqn:GG.Straighten} implies this is $\GG$-equivalent to zero.
\end{proof}

Our notions of \emph{cancelling segments} and the sets $\Seg_m(I)$ are related by the following proposition.

\begin{prop}
\label{prop:Seg.Cancel}
Let $I\in\Bad_{k,m}$ and fix $\epsilon\in\{0,1\}^{k-1}$ with $\epsilon_m =1$. Then
\begin{equation}
\label{eqn:Seg.Cancel}
\sum_{J\in \Seg_m(I)} \T{\lambda}{J}{\epsilon}
\end{equation}
forms a cancelling segment of Type A, B, or C on the variables $t_m$ and $t_{m+1}$. Specifically,
	\begin{enumerate}[label=\emph{(\alph*)},leftmargin=*]
		\item if $k=m+1$ and $\lambda_m = \lambda_{m+1}$, then \eqref{eqn:Seg.Cancel} is of Type A and hence is $\GG$-equivalent to $0$;
		\item conversely if $k>m+1$ or $\lambda_m>\lambda_{m+1}$, when
			\begin{enumerate}[label=\emph{(\roman*)}]
				\item $\epsilon_{m+1}=0$, then \eqref{eqn:Seg.Cancel} is of Type B and hence is $\GG$-equivalent to $\T{\lambda}{\Max_m(I)}{\epsilon}$;
				\item $\epsilon_{m+1}=1$, then \eqref{eqn:Seg.Cancel} is of Type C and hence is $\GG$-equivalent to $\T{\lambda}{\Max_m(I)}{\epsilon'}$ where $\epsilon' = \epsilon - e_{m+1}$. 
			\end{enumerate}
	\end{enumerate}
\end{prop}

\begin{proof}
Recall that $I\in \Bad_{k,m}$ means that $I_m=0$ and either $\lambda_m\leq\lambda_{m+1}+I_{m+1}$ (for $\epsilon_m=1$) or $\lambda_{m}<\lambda_{m+1}+I_{m+1}$ (for $\epsilon_m=0$). Throughout the proof, write $a=\lambda_m$ and $b=\lambda_{m+1}+I_{m+1}$. 

We first consider (a). In this case, $k=m+1$ necessarily implies that $\epsilon_{m+1} = 0$, so we observe that the sum \eqref{eqn:Seg.Cancel} is equal to
\begin{equation}
\label{eqn:Seg.case.a}
f\cdot \left(t_m^{a}t_{m+1}^{b} + t_m^{a+1}t_{m+1}^{b-1} + \cdots  + t_m^{\lambda_{m}+[I_{m+1}-(\lambda_m - \lambda_{m+1} +1)]}t_{m+1}^{\lambda_{m+1} + [\lambda_{m}-\lambda_{m+1} +1]}\right)(1-t_m)
\end{equation}
since the requirement $k=m+1$ for all $J\in\Seg_m(I)$ and the condition $\lambda_m = \lambda_{m+1} = a$ implies that the sequence in $J\in\Seg_m(I)$ with least value of $J_{m+1}$ must have $J_{m+1} = \lambda_m -\lambda_{m+1} +1 = 1$. Moreover, in the expression above we have set
\[
f = \prod_{i\neq m,m+1} t_i^{\lambda_i+I_i} \cdot \prod_{i=1}^{m-1} (1-t_i)^{\epsilon_i}
\]
which is independent of $t_m$ and $t_{m+1}$. Now, arithmetic in the exponents of \eqref{eqn:Seg.case.a} implies that \eqref{eqn:Seg.Cancel} is equal to $f\cdot \tilde\sigma_m(a,b)\cdot(1-t_m)$, a cancelling segment of Type A, as desired.

In case (b), the sum \eqref{eqn:Seg.Cancel} becomes:
\begin{equation}
\label{eqn:Seg.case.b}
f\cdot \left(t_m^{a}t_{m+1}^{b} + t_m^{a+1}t_{m+1}^{b-1} + \cdots + t_m^{\lambda_{m}+[I_{m+1}-(\lambda_m - \lambda_{m+1})]}t_{m+1}^{\lambda_{m+1} + [\lambda_{m}-\lambda_{m+1}]}\right)(1-t_m)(1-t_{m+1})^{\epsilon_{m+1}}
\end{equation}
where we now have
\[
f = \prod_{i\neq m,m+1} t_i^{\lambda_i+I_i} \cdot \prod_{i\neq m,m+1} (1-t_i)^{\epsilon_i},
\]
which is again independent of $t_m$ and $t_{m+1}$. We note that, taking the last term of the parenthetical sum in \eqref{eqn:Seg.case.b} produces
\[
\T{\lambda}{\Max_m(I)}{\epsilon} = f\cdot \left(t_m^{\lambda_{m}+[I_{m+1}-(\lambda_m - \lambda_{m+1})]}t_{m+1}^{\lambda_{m+1} + [\lambda_{m}-\lambda_{m+1}]}\right)(1-t_m)(1-t_{m+1})^{\epsilon_{m+1}}.
\]
Hence, $\T{\lambda}{\Max_m(I)}{\epsilon} = f\cdot \left(t_m^b t_{m+1}^a\right) (1-t_m)(1-t_{m+1})^{\epsilon_{m+1}}$ and so \eqref{eqn:Seg.case.b} is equal to $f\cdot\sigma_m(a,b)\cdot(1-t_m)(1-t_{m+1})^{\epsilon_{m+1}}$, which is either a cancelling segment of Type B (when $\epsilon_{m+1} = 0$) or of Type C (when $\epsilon_{m+1} = 1$). The specified $\GG_\bfst$-equivalences then follow from Proposition \ref{prop:cancel.segments}.
\end{proof}

The following Lemma establishes that the results of performing the cancellations described by Proposition \ref{prop:Seg.Cancel} result in $m$-sorted polynomials. Of course, in Proposition \ref{prop:Seg.Cancel}(a) the resulting $\GG_\bfst$-equivalent polynomial is trivially sorted since it vanishes.

\begin{lem}
\label{lem:sorted.after.cancel}
The polynomials $\T{\lambda}{\Max_m(I)}{\epsilon}$ of case \emph{(b.i)} and $\T{\lambda}{\Max_m(I)}{\epsilon'}$ of case \emph{(b.ii)} of Proposition \ref{prop:Seg.Cancel} are both $m$-sorted.
\end{lem}

\begin{proof}
When $\epsilon_{m+1} = 0$ we have 
\begin{align}
\T{\lambda}{\Max_m(I)}{\epsilon} 
	&= f(t)\cdot t_m^{\lambda_m+[I_{m+1}-(\lambda_m-\lambda_{m+1})]} t_{m+1}^{\lambda_{m+1}+[\lambda_m-\lambda_{m+1}]} \cdot (1-t_m)^{\epsilon_m} 
	\nonumber \\
	&= f(t)\cdot t_m^{I_{m+1}+\lambda_{m+1}}t_{m+1}^{\lambda_m} \cdot (1-t_m)^{\epsilon_m}
	\label{eqn:star}
\end{align}
where ${I_{m+1}+\lambda_{m+1}} > {\lambda_m}$ is guaranteed by the assumption that $I\in\Bad_{k,m}$, i.e.~$I$ is not $m$-sorted and $I_m = 0$. Hence \eqref{eqn:star} is $m$-sorted.

When $\epsilon_{m+1}=1$ we have that the expression \eqref{eqn:star} is now equal to $\T{\lambda}{\Max_m(I)}{\epsilon'}$. Moreover, this time ${I_{m+1}+\lambda_{m+1}} \geq {\lambda_m}$ by the assumption that $I\in\Bad_{k,m}$. Since the exponent on $t_{m+1}$ will be identical for every monomial term in the expansion of \eqref{eqn:star}, this is still enough to guarantee that the polynomial is $m$-sorted.
\end{proof}

We conclude this subsection with two Lemmas that will be used repeatedly in Section \ref{ss:K.Pieri.positivity.proof.algorithm} to guarantee that the cancellation of non-sorted terms in Theorem \ref{thm:GG.finiteness} is organized appropriately. In particular, Lemma \ref{lem:Seg.disjoint} implies that the needed canceling segments are always disjoint; Lemma \ref{lem:Seg.constant.epsilon} guarantees that the value of $\epsilon$ will be constant on our canceling segments so that Proposition \ref{prop:Seg.Cancel} applies.

\begin{lem}
\label{lem:Seg.disjoint}
If $I,I'\in\Bad_{k,m}$ then either $\Seg_m(I)\intersect\Seg_m(I') = \emptyset$ or $I=I'$.
\end{lem}

\begin{proof}
Suppose that $J\in\Seg_m(I)\intersect\Seg_m(I')$. Then $I_i = J_i = I'_i$ for all $i\neq m,m+1$. Moreover, $I_m=I'_m = 0$ and so the condition that $|I|=n=|I'|$ also implies that $I_{m+1} = I'_{m+1}$; i.e.~$I=I'$.

Conversely, if $I\neq I'$, then $I_i\neq I'_i$ for at least one $i\neq m$ (we already must have $I_m = 0 = I'_m$). If $i\neq m+1$ then every element of $\Seg_m(I)$ must have a different $i$-th entry than any element of $\Seg_m(I')$, and hence their intersection is empty. Finally, if $i=m+1$ then suppose $J$ is in the intersection. By the definition of $\Seg_m$ we must have $I_j = J_j = I'_j$ for every $j\neq m,m+1$. But then $I_m = 0 = I'_m$ and $|I| = n = |I'|$ contradicts the assumption that $I_{m+1} \neq I'_{m+1}$.
\end{proof}

\begin{lem}
\label{lem:Seg.constant.epsilon}
Let $I'\in\Bad_{k,m}$. Suppose we are in the case $k>m+1$ or $\lambda_m>\lambda_{m+1}$ and let $I = \Max_m(I')$. 
If $I\in\Bad_{k,m'}$ for $m'<m$ then for every $J\in\Seg_{m'}(I)$ there exists $J'\in\Bad_{k,m}$ such that $J=\Max_m(J')$.
\end{lem}

\begin{proof}
First, we observe the following fact: for any $K'\in\Bad_{k,m}$ we have that $K=\Max_m(K')$ if and only if $K\in\Seg_m(K')$ and $K_{m+1} = \lambda_m-\lambda_{m+1}$.

Applying this observation to $I$, we see that since $I=\Max_m(I')$ we must have $I_m = I'_{m+1}-(\lambda_m-\lambda_{m+1})$ and $I_{m+1} = \lambda_m - \lambda_{m+1}$. Since $I\in\Bad_{k,m'}$ we must also have $I_{m'}=0$. Thus $I$ has the form:
\[
(I_1,\ldots,I_{m'-1},0,I_{m'+1},\ldots,I_{m-1},I'_{m+1}-(\lambda_m-\lambda_{m+1}), \lambda_m-\lambda_{m+1},I_{m+2},\ldots,I_{p+1}).
\]
Now suppose that $J\in\Seg_{m'}(I)$. This means that $J_i = I_i$ for all $i\neq m',m'+1$. In particular, $m>m'$ means that $m+1>m'+1$ and thus $J_{m+1} = \lambda_m - \lambda_{m+1}$.

Now define a sequence $J'$ by: $J'_j = J_j$ for all $j\neq m,m+1$, $J'_m = 0$, and $J'_{m+1} = J_m + J_{m+1}$. Indeed, $|J'|=|J|$ so $J'\in\Comp_{p+1}(n)$ and moreover 
\begin{align*}
\lambda_{m+1} + J'_{m+1} 
	& = \lambda_{m+1}+J_m + J_{m+1} \\
	& = \lambda_{m+1}+J_m + (\lambda_m - \lambda_{m+1}) \\
	& = J_m + \lambda_m \geq J'_m + \lambda_m
\end{align*}
where the last inequality follows because $J'_m = 0$. This proves $\T{\lambda}{J'}{}$ is not $m$-sorted and thus $J'\in\Bad_{k,m}$. Finally, the fact that $J_{m+1} = \lambda_m - \lambda_{m+1}$ guarantees that $J = \Max_m(J')$.
\end{proof}

\subsection{Positivity in the $K$-Pieri rule}
\label{ss:K.Pieri.positivity.proof.algorithm}

For a fixed integer $k\in[p+1]$, we consider the set of polynomials $\curly T_k = \{\T{\lambda}{I}{}:I\in\Comp_{p+1}^k(n)\}$. We will now describe an algorithmic procedure which proves the following theorem.

\begin{thm}
\label{thm:equivalent.to.sorted.poly}
For every $k \in [p+1]$, the sum $\sum_{f\in\curly T_k} f$ is $\GG_\bfst$-equivalent to a sorted polynomial.
\end{thm}

\begin{proof}
We begin with the special case that $\lambda_{k-1} = \lambda_k$. We claim that $\sum_{f\in\curly T_k} f$ is $\GG_\bfst$-equivalent to zero, and hence is trivially sorted. In this case, take $I = (I_1,\ldots,I_k,0,\ldots,0)$ with $I_k\neq 0$; that is, $I\in \Comp_{p+1}^k(n)$. There exists a sequence
\[
I' = (I_1,\ldots,I_{k-2},0,I_{k-1} + I_k,0,\ldots,0)
\]
also in $\Comp_{p+1}^k(n)$. Moreover, since $\lambda_{k-1} = \lambda_k$, it is guaranteed that $I'\in\Bad_{k,k-1}$. Thus Proposition \ref{prop:Seg.Cancel}(a) implies that
\[
\GG_\bfst\left( \sum_{J\in\Seg_{k-1}(I')} \T{\lambda}{J}{}\right) = 0.
\]
Since $I\in\Seg_{k-1}(I')$ and this can be done for any $I \in \Comp_{p+1}^k(n)$, we have proven that every term of $\curly T_k$ appears as part of a cancelling segment of Type A as above. Finally, Lemma \ref{lem:Seg.disjoint} ensures that each term of $\curly T_k$ appears in exactly one such segment, and the claim is proved.

If, on the other hand, $\lambda_{k-1} > \lambda_{k}$, we will use an inductive procedure. By Lemma \ref{lem:m.bound} the largest $m$ for which a term $f\in \curly T_k$ can fail to be $m$-sorted is $m = k-1$. For $I\in \Bad_{k,k-1}$ form the cancelling segment
\[
\sum_{J\in\Seg_{k-1}(I)} \T{\lambda}{J}{}.
\]
By Proposition \ref{prop:Seg.Cancel}(b), this is $\GG_\bfst$-equivalent to $\T{\lambda}{\Max_{k-1}(I)}{}$ and furthermore, by Lemma \ref{lem:sorted.after.cancel}, $\T{\lambda}{\Max_{k-1}(I)}{}$ is $(k-1)$-sorted. Since Lemma \ref{lem:Seg.disjoint} ensures that such cancelling segments are disjoint, after performing these cancellations for all $I\in\Bad_{k,k-1}$ we are left only with polynomials $\T{\lambda}{J}{} \in \curly T_k$ which are $(k-1)$-sorted. We will repeat a similar procedure for each $m<k-1$, in descending order. 

Consider a fixed $m<k-1$ and suppose we have completed our cancelling procedure corresponding to integers exceeding $m$. This implies that any remaining uncanceled $\T{\lambda}{I}{\epsilon}$, with $I\in\Comp_{p+1}^k(n)$, is $m'$-sorted for $m'>m$. If all of these are already $m$-sorted, then go on to consider those which are not $(m-1)$-sorted. Otherwise, Remark \ref{rem:Bad.empty} guarantees $\Bad_{k,m}$ is non-empty, and so we choose $I \in \Bad_{k,m}$. We form the cancelling segment
\begin{equation}
\label{eqn:big.proof.cancel.seg.m}
\sum_{J\in\Seg_{m}(I)} \T{\lambda}{J}{\epsilon}
\end{equation}
where the value of $\epsilon$ depends on needed cancellations from steps corresponding to $m'>m$. Explicitly, if for some $J\in\Seg_{m}(I)$ we had $J = \Max_{m'}(J')$ for some $J'$, then $\epsilon_{m'+1}$ will be $0$. In any event, Lemma \ref{lem:Seg.constant.epsilon} guarantees that the value of $\epsilon$ will be constant on $\Seg_{m}(I)$ and moreover, since we are working by descending values of $m$, we will have $\epsilon_m = 1$. Hence Proposition \ref{prop:Seg.Cancel}(b) implies that \eqref{eqn:big.proof.cancel.seg.m} is $\GG_\bfst$-equivalent to either
\begin{enumerate}[label=(\roman*)]
\item $\T{\lambda}{\Max_{m}(I)}{\epsilon}$ when $\epsilon_{m+1}=0$, or
\item $\T{\lambda}{\Max_{m}(I)}{\epsilon'}$ (with $\epsilon' = \epsilon-e_{m+1}$) when $\epsilon_{m+1}=1$.
\end{enumerate}
Using Lemma \ref{lem:sorted.after.cancel}, we conclude that the result is $m$-sorted, while Lemma \ref{lem:Seg.disjoint} implies the segments above for each $I \in \Bad_{k,m}$ are disjoint. Hence, after performing this cancellation for each $I\in \Bad_{k,m}$,  we are left only with polynomials $\T{\lambda}{J}{\epsilon}$ which are $m'$-sorted for all $m'\geq m$.

By repeating this procedure on decreasing values of $m$, we finally obtain a polynomial which is sorted, as desired.
\end{proof}

\begin{cor}
\label{cor:GG.positivity}
The coefficients $c_{\lambda,n}^\mu$ of Theorem \ref{thm:K.Pieri.rule} are positive.
\end{cor}

\begin{proof}
The algorithm described by the proof of Theorem \ref{thm:equivalent.to.sorted.poly} implies that the product $G_\lambda\cdot G_{(n)}$ is equal to applying $\GG_\bfst$ to a sum of sorted polynomials $\T{\lambda}{I}{\epsilon}$. The lowest degree term of $\T{\lambda}{I}{\epsilon}$ is $t^{\lambda + I}$, whose degree is equal to $|\lambda| + n$. Higher degree terms are obtained by expanding \[\prod_{i=1}^{\ell(I)-1} (1-t_i)^{\epsilon_i}\] from which it follows that the signs of $\T{\lambda}{I}{\epsilon}$ alternate in total degree, as specified in Theorem \ref{thm:K.Pieri.rule}.
\end{proof}

\section{Translation to combinatorics}
\label{s:translate.comb}

\subsection{A new statement of the $K$-Pieri rule}
In this subsection, we give a combinatorial translation for the $K$-Pieri rule which follows from the positivity proof of Section \ref{s:K.Pieri.positivity}.

\begin{thm}
\label{thm:GG.K.Pieri.comb}
For any partition $\lambda$ and positive integer $n$, we have
\begin{equation}
\label{eqn:GG.K.Pieri.comb}
G_\lambda\cdot G_{(n)} = \GG_\bfst \left(
	\sum_\mu t^\mu \prod_{i=1}^p(1-t_i)^{\epsilon_i(\mu)} \right)
\end{equation}
where the sum ranges over partitions $\mu\supset\lambda$ such that $\mu/\lambda$ is a horizontal $n$-strip. The numbers $\epsilon_i(\mu)\in\{0,1\}$ are determined by the following rules:
	\begin{enumerate}[leftmargin=*,label=\emph{(\alph*)}]
	\item $\epsilon_1(\mu) = 1$ if and only if $\ell(\mu-\lambda)>1$;
	\item when $i \geq \ell(\mu-\lambda)$ then $\epsilon_i(\mu) = 0$;
	\item when $1 < i < \ell(\mu-\lambda)$ then $\epsilon_i(\mu) = 0$ if and only if $\lambda_{i-1}=\mu_i$.
	\end{enumerate}
Each of the polynomials $t^\mu \prod_{i=1}^p(1-t_i)^{\epsilon_i(\mu)}$ is obtained through the algorithmic procedure of Section \ref{ss:K.Pieri.positivity.proof.algorithm} and is necessarily sorted.
\end{thm}

\begin{proof}
First observe that $t^\mu \prod_{i=1}^p(1-t_i)^{\epsilon_i(\mu)} = \T{\lambda}{I}{\epsilon(\mu)}$ where $I = \mu-\lambda$. If a polynomial $\T{\lambda}{I}{\epsilon}$ survives our cancellation procedures for some $\epsilon$, then $\mu=\lambda+I$ must necessarily be a partition obtained by adding $n$ boxes to $\lambda$ (since $|I|=n$). We now show that if two of the added boxes appear in the same column, then $\T{\lambda}{I}{\epsilon}$ must be cancelled at some point in our procedure (no matter the value of $\epsilon$).

The condition that two boxes are added to the same column of $\lambda$ in order to form the partition $\mu$ is equivalent to the existence of an $i\in[p+1]$ such that
\begin{equation}
\label{eqn:same.column.cond}
\lambda_{i}+I_{i}>\lambda_{i-1}\quad\text{and}\quad I_{i-1}>0,
\end{equation}
or, as pictures of the relevant rows of the Young diagram we have the following (with the shaded portion indicating that boxes have been added to the same column).
\begin{center}
\begin{tikzpicture}[scale=.4]
\fill[cyan!25] (3,0) rectangle (4,2); %same column overlap

\draw (0,0) rectangle (2,1); %\lambda_i
\draw (0,1) rectangle (3,2); %\lambda_{i-1}
\draw (3,1) rectangle (5,2); %I_{i-1}
\draw (2,0) rectangle (4,1); %I_i

\node (lambdai) at (1,.5){\tiny$\lambda_i$};
\node (lambdai-1) at (1.5,1.5){\tiny$\lambda_{i-1}$};
\node (Ii) at (3,.5){\tiny$I_i$};
\node (Ii-1) at (4,1.5){\tiny$I_{i-1}$};

\end{tikzpicture}
\end{center}
Form the sequence $I' \in \Comp_{p+1}(n)$ as follows:
\[
I' = \left( I_1, \ldots, I_{i-2}, 0, I_{i-1} + I_i, I_{i+1}, \ldots, I_{p+1} \right).
\]
and set $k = \ell(I')$. The conditions \eqref{eqn:same.column.cond} imply that $I'\in\Bad_{k,i-1}$. Moreover, we observe that $\ell(I) = \ell(I')$ and $\lambda_{i-1}-\lambda_i<I_{i-1}$ which implies that $I \in \Seg_{i-1}(I')$. In any case of Proposition \ref{prop:Seg.Cancel}, $I \neq \Max_{k-1}(I')$ and hence the term $\T{\lambda}{I}{\epsilon}$ will be cancelled by our procedure.

Now, we wish to show that if $\mu = \lambda+I$ for a sequence $I$ which does \emph{not} add two boxes in \emph{any} column, then $\T{\lambda}{I}{\epsilon}$ survives our procedure, for some value of $\epsilon$. Set $\ell(I) = k$. In this case, $I$ must have the property that for all $i\in[k]$,
\begin{equation}
\label{eqn:no.column.cond}
\lambda_i + I_i \leq \lambda_{i-1} \iff I_{i}-(\lambda_{i-1}-\lambda_i)\leq 0
\end{equation}
or as a picture:
\begin{center}
\begin{tikzpicture}[scale=.4]

\draw (0,0) rectangle (1.5,1); %\lambda_i
\draw (0,1) rectangle (3,2); %\lambda_{i-1}
\draw (3,1) rectangle (5,2); %I_{i-1}
\draw (1.5,0) rectangle (3,1); %I_i

\node (lambdai) at (.75,.5){\tiny$\lambda_i$};
\node (lambdai-1) at (1.5,1.5){\tiny$\lambda_{i-1}$};
\node (Ii) at (2.25,.5){\tiny$I_i$};
\node (Ii-1) at (4,1.5){\tiny$I_{i-1}$};

\end{tikzpicture}
\end{center}
If $\T{\lambda}{I}{\epsilon}$ is cancelled by our procedure then there must exist $m$ and $\hat{I}$ such that $I\in \Seg_{k,m}(\hat{I})$. Recall that by Lemma \ref{lem:m.bound} we must have that $k\geq m+1$. Moreover, we must have
\[
\hat{I} = \left( I_1,\ldots, I_{m-1}, 0 , I_{m}+I_{m+1}, I_{m+2} ,\ldots, I_k, 0,\ldots, 0\right).
\]
We claim that we cannot be in the case that $k=m+1$ and $\lambda_m=\lambda_{m+1}$, for otherwise $I$ does not satisfy the condition that $\lambda_{m}-\lambda_{m+1} \geq I_{m+1}$ since the lefthand side is zero while the righthand side is positive. Hence, we consider the cancelling segment (of Type B or C), $\sum_{J\in\Seg_m(\hat{I})} \T{\lambda}{J}{\epsilon}$. 
By definition, $\Max_m(\hat{I})$ equals
\begin{equation}
\label{eqn:Max.application}
	(I_1,\ldots,I_{m-1},
			I_m+ \mathop{\underbrace{I_{m+1} - (\lambda_m-\lambda_{m+1})}}_{\leq 0~\text{by \eqref{eqn:no.column.cond}}},
			\lambda_m-\lambda_{m+1},
			I_{m+2},\ldots,I_k,0,\ldots,0
			).
\end{equation}
If the underbraced inequality is strict then, because $I_m$ exceeds the $m$-th component of $\Max_m(\hat{I})$, we conclude $I$ is not in $\Seg_m(\hat{I})$ and therefore $\T{\lambda}{I}{\epsilon}$ is not cancelled. If, on the other hand, the underbraced inequality is zero, then $I = \Max_m(\hat{I})$ and hence $\T{\lambda}{I}{\epsilon}$ is also not cancelled, but either 
\[ 
\T{\lambda}{I}{\epsilon} = \T{\lambda}{\Max_m(\hat{I})}{\epsilon}
\quad\text{ or }\quad 
\T{\lambda}{I}{\epsilon} \rightsquigarrow \T{\lambda}{\Max_m(\hat{I})}{\epsilon'}
\]
depending on the value of $\epsilon_{m+1} \in \{0,1\}$, c.f.~Proposition \ref{prop:Seg.Cancel}. This establishes that $G_\lambda\cdot G_{(n)}$ can be written as 
\[
\GG_\bfst\left(\sum_\mu t^\mu\prod_{i=1}^p(1-t_i)^{\epsilon_i(\mu)}\right)
\] 
with the sum over the desired $\mu$. It remains only to establish the conditions (a), (b), and (c) on the numbers $\epsilon_i(\mu)$.

We observe that in our procedure from Section \ref{ss:K.Pieri.positivity.proof.algorithm} the value of $\epsilon_i$ becomes zero only when cancelling terms which are not $(i-1)$-sorted. Hence for (a), if $\ell(I)>1$, then $\epsilon_1$ is never changed to zero. If $\ell(I)=1$, which happens only for the sequence $(n,0,\ldots,0)$, then $\T{\lambda}{(n,0,\ldots,0)}{} = t^{\lambda+I}$. Condition (b) follows directly from the definition of $\T{\lambda}{I}{\epsilon}$.

To prove (c), notice that the value of $\epsilon_i(\mu)$ changes $1 \mapsto 0$ if and only if $I = \mu-\lambda = \Max_{i-1}(\hat{I})$ for some $\hat{I}$.  We can see, e.g.~from Equation \eqref{eqn:Max.application}, that this occurs exactly when $\lambda_{i-1}-\lambda_{i} = I_i = \mu_i-\lambda_i$; i.e.~when $\lambda_{i-1} = \mu_i$ as desired.
\end{proof}

\begin{defn}
\label{defn:A.lambda.mu}
Given a partition $\mu\supset \lambda$, such that $\mu/\lambda$ is a horizontal $n$-strip, define the set 
\[
A_{\lambda,\mu} = \left\{
	i\in[p] : \epsilon_i(\mu) = 1
	\right\}.
\]
\end{defn}

With this notation, we can restate Theorem \ref{thm:GG.K.Pieri.comb} in the more aesthetically pleasing form below.

\begin{cor}
\label{cor:GG.K.Pieri.comb.Asets}
For any partition $\lambda$ and positive integer $n$, we have
\begin{equation}
\label{eqn:GG.K.Pieri.comb.Asets}
G_\lambda\cdot G_{(n)} = \GG_\bfst \left(
	\sum_\mu t^\mu \prod_{i\in A_{\lambda,\mu}} (1-t_i) \right)
\end{equation}
where the sum ranges over partitions $\mu$ such that $\mu/\lambda$ is a horizontal $n$-strip.
\end{cor}

Furthermore, we can give a combinatorial description of the sets $A_{\lambda,\mu}$. It is known that $\mu/\lambda$ is a horizontal strip if and only if $\mu$ and $\lambda$ satisfy the following ``interlacing'' property, see e.g.\ \cite[Section~I.1]{im1995},
\begin{equation}
\label{eqn:interlaced}
\mu_1\geq\lambda_1\geq\mu_2\geq\lambda_2 \geq \cdots
\end{equation}
Thus every horizontal strip $\mu/\lambda$ corresponds to a sequence of strict inequalities and equalities (with only finitely many inequalities). We will call this sequence the \emph{code} of the horizontal strip, and we will denote it by $\code(\mu/\lambda)$. From this, we define the \emph{odd code} (respectively \emph{even}) \emph{code} which is the subsequence comprised of the terms of $\code(\mu/\lambda)$ indexed by odd (resp.\ even) numbers; by convention, the first term of a sequence is indexed by $1$. These will be respectively denoted by $\ocode(\mu/\lambda)$ and $\ecode(\mu/\lambda)$.

\begin{example}
\label{ex:code}
For example, with $\mu=(3,3,2,1,1)$ and $\lambda=(3,2,2,1)$ the skew diagram $\mu/\lambda$ is the horizontal $2$-strip (shaded blue)
\[
\ytableausetup{centertableaux,smalltableaux}
{\tiny
\ydiagram [*(cyan)]{3+0,2+1,2+0,1+0,0+1} * [*(white)]{3,3,2,1,1}
}.
\]
The interlacing property of Equation \eqref{eqn:interlaced} takes the form
\[
3 = 3 = 3 > 2 = 2 = 2 > 1 = 1 = 1 > 0 = \cdots
\]
and the corresponding code is
\[
\code\left( \mu/\lambda \right) = \{ = , = , > , = , = , >, =,=,>,=,\ldots\}. 
\]
Hence,
\begin{gather*}
\ocode\left( \mu/\lambda \right) = \{ =  , >  , = , = , > , = ,\ldots\} \\
\ecode\left( \mu/\lambda \right)  =\{ = , = , >, = ,\ldots\}.
\end{gather*}
\end{example}

This leads to the following combinatorial description of the sets $A_{\lambda,\mu}$.

\begin{prop}
\label{prop:A.lambda.mu.code}
If the last non-equality appearing in $\ocode(\mu/\lambda)$ occurs in its $k$-th term, then $\ell(\mu-\lambda) = k$. Furthermore
	\begin{enumerate}[leftmargin=*,label=\emph{(\alph*)}]
	\item $1\in A_{\lambda,\mu}$ if and only if $k>1$;
	\item $i\notin A_{\lambda,\mu}$ for all $i\geq k$;
	\item for $1<i<k$, $i\in A_{\lambda,\mu}$ if and only if the $(i-1)^{\text{st}}$ entry of $\ecode(\mu/\lambda)$ is a strict inequality.
	\end{enumerate}
\end{prop}

\begin{proof}
The $i^{\text{th}}$ entry of the odd code compares $\mu_i$ to $\lambda_i$. For large enough $i$, both of these numbers are zero and are therefore equal. Hence, the last occurrence of a ``$>$'' sign in the odd code must be the length of the sequence $\mu-\lambda$. Properties (a), (b), and (c) are the straightforward translations of items (a), (b), and (c) from Theorem \ref{thm:GG.K.Pieri.comb}.
\end{proof}	

\subsection{Cohomological Pieri rule}
\label{ss:coho.pieri.rule.pf}
We can apply our iterated residue version of the $K$-Pieri rule to prove the cohomological Pieri rule. 

\begin{cor}
\label{cor:GG.implies.H.Pieri}
The result of Theorem \ref{thm:GG.K.Pieri.comb} implies Theorem \ref{thm:H.Pieri.rule}, a.k.a.\ the cohomological Pieri rule.
\end{cor}

\begin{proof}
For a partition $\nu$, the lowest degree homogeneous part of $G_\nu$ is the Schur function $s_\nu$, so applying the observations of Remark \ref{rem:low.deg.of.GG} to Equation \eqref{eqn:GG.K.Pieri.comb.Asets} gives
\[
s_\lambda\cdot s_{(n)} = \SSS_\bfst\left(\sum_\mu t^\mu\right) = \sum_\mu s_\mu
\] 
with the sum ranging over partitions $\mu$ such that $\mu/\lambda$ is a horizontal $n$-strip.
\end{proof}

\subsection{An example computation}
We conclude this section with an example. Let $\lambda = (3,2,2,1)$ and $n=2$. The results of applying our method to the multiplication $G_\lambda\,G_{(n)}$ are summarized in Table \ref{table:example.GG.comb}. Furthermore, we illustrate the details of the algorithm in the proof of Theorem \ref{thm:equivalent.to.sorted.poly} below.
\begin{table}
	\begin{tabular}{c|c|c|c}
$\mu$ & $\ell(\mu-\lambda)$ & $\epsilon(\mu)$ & $A_{\lambda,\mu}$

 \\ \hline &&& \\

{\tiny
\ytableausetup{centertableaux}
\ydiagram [*(cyan)]{3+2,2+0,2+0,1+0} * [*(white)]{5,2,2,1}
} $= (5,2,2,1)$
& $1$
& $\epsilon_i = 0 \quad \forall i\geq 1$
& $\emptyset$

\\ &&& \\ \hline &&& \\

{\tiny
\ytableausetup{centertableaux}
\ydiagram [*(cyan)]{3+1,2+1,2+0,1+0} * [*(white)]{4,3,2,1}
} $= (4,3,2,1)$
& $2$
& {$\begin{array}{c} \epsilon_1 = 1 \\ 
	\epsilon_i = 0 \quad \forall i\geq2 \end{array}$}
& \{1\}

\\ &&& \\ \hline &&& \\

{\tiny
\ytableausetup{centertableaux}
\ydiagram [*(cyan)]{3+1,2+0,2+0,1+1} * [*(white)]{4,2,2,2}
} $= (4,2,2,2)$
& $4$
& {$\begin{array}{c} \epsilon_1 = 1 \\ 
	\mu_2\neq\lambda_1 \implies \epsilon_2 =1 \\
	\mu_3 = \lambda_2 \implies \epsilon_3 = 0 \\
	\epsilon_i = 0 \quad \forall i\geq4 \end{array}$}
& $\{1,2\}$

\\ &&& \\ \hline &&& \\

{\tiny
\ytableausetup{centertableaux}
\ydiagram [*(cyan)]{3+0,2+1,2+0,1+1} * [*(white)]{3,3,2,2}
} $= (3,3,2,2)$
& $4$
& {$\begin{array}{c} \epsilon_1 = 1 \\ 
	\mu_2 = \lambda_1 \implies \epsilon_2 = 0 \\
	\mu_3 = \lambda_2 \implies \epsilon_3 = 0 \\
	\epsilon_i = 0 \quad \forall i\geq4 \end{array}$}
& $\{1\}$

\\ &&& \\ \hline &&& \\

{\tiny
\ytableausetup{centertableaux}
\ydiagram [*(cyan)]{3+0,2+0,2+0,1+1,0+1} * [*(white)]{3,2,2,2,1}
} $= (3,2,2,2,1)$
& $5$
& {$\begin{array}{c} \epsilon_1 = 1 \\ 
	\mu_2 \neq \lambda_1 \implies \epsilon_2 = 1 \\
	\mu_3 = \lambda_2 \implies \epsilon_3 = 0 \\
	\mu_4 = \lambda_3 \implies \epsilon_4 = 0 \\
	\epsilon_i = 0 \quad \forall i\geq5  \end{array}$}
& $\{1,2\}$

\\ &&& \\ \hline &&& \\

{\tiny
\ytableausetup{centertableaux}
\ydiagram [*(cyan)]{3+0,2+1,2+0,1+0,0+1} * [*(white)]{3,3,2,1,1}
} $= (3,3,2,1,1)$
& $5$
& {$\begin{array}{c} \epsilon_1 = 1 \\ 
	\mu_2 = \lambda_1 \implies \epsilon_2 = 0 \\
	\mu_3 = \lambda_2 \implies \epsilon_3 = 0 \\
	\mu_4 \neq \lambda_3 \implies \epsilon_4 = 1 \\
	\epsilon_i = 0 \quad \forall i\geq5  \end{array}$}
& $\{1,4\}$

\\ &&& \\ \hline &&& \\

{\tiny
\ytableausetup{centertableaux}
\ydiagram [*(cyan)]{3+1,2+0,2+0,1+0,0+1} * [*(white)]{4,2,2,1,1}
} $= (4,2,2,1,1)$
& $5$
& {$\begin{array}{c} \epsilon_1 = 1 \\ 
	\mu_2 \neq \lambda_1 \implies \epsilon_2 = 1 \\
	\mu_3 = \lambda_2 \implies \epsilon_3 = 0 \\
	\mu_4 \neq \lambda_3 \implies \epsilon_4 = 1 \\
	\epsilon_i = 0 \quad \forall i\geq5  \end{array}$}
& $\{1,2,4\}$
	
\\ &&& \\ \hline  
	\end{tabular}
\caption{The lefthand column depicts all of the ways to form a partition $\mu$ by adding $2$ boxes to $(3,2,2,1)$, with no two added boxes in the same column (added boxes are shaded blue). Observe that the length of the sequence $I = \mu-\lambda$ is the row number of the southernmost added box. The third column shows the computations for conditions (a), (b), and (c) in Theorem \ref{thm:GG.K.Pieri.comb}.}
\label{table:example.GG.comb}
\end{table}

Since $p=4$ we must consider the set of integer sequences
\begin{multline*}
\Comp_5(2) = \{(2,0,0,0,0)\} \disjointunion \{(0,2,0,0,0),(1,1,0,0,0)\} \disjointunion \{(0,0,2,0,0),(0,1,1,0,0),(1,0,1,0,0)\} \\	
	\disjointunion \{(0,0,0,2,0),(0,0,1,1,0),(0,1,0,1,0),(1,0,0,1,0)\} \\
	 \disjointunion \{(0,0,0,0,2),(0,0,0,1,1),(0,0,1,0,1),(0,1,0,0,1),(1,0,0,0,1)\}
\end{multline*}
where we have already partitioned $\Comp_5(2)$ as the disjoint union of $\Comp_5^k(2)$ for $k\in[5]$. When $k = 1$ we have the associated polynomial
\[
\T{\lambda}{(2,0,0,0,0)}{} = t^{(5,2,2,1)}
\]
which is sorted, so we leave it alone. When $k=2$ we have the associated sum of $\T{\lambda}{I}{}$:
\begin{center}
\begin{tikzpicture}
\node[draw=cyan,ultra thick, rectangle] {$\cancel{t^{(3,4,2,1)}(1-t_1)} + {\color{cyan}t^{(4,3,2,1)}(1-t_1)}$};
\end{tikzpicture}
\end{center}
where we used that the above forms a cancelling segment of Type B on $t_1$ and $t_2$. Throughout the rest of the example, we will adopt the following conventions to make our computation more clear. We use blue (as above) to indicate cancellations performed on segments involving $t_1$ and $t_2$, red for segments involving $t_2$ and $t_3$, green for segments involving $t_3$ and $t_4$, and purple for segments involving $t_4$ and $t_5$. The rightmost term in a cancelling segment box corresponding to a sequence of the type $\Max_\bullet(\bullet)$ survives a cancellation of Type B or Type C, and we will color it with the corresponding color. Furthermore, if the segment is of Type C, we will show the corresponding value of $\epsilon_i$ which has become $0$ by crossing out $(1-t_i)$. In the case $k=3$ we obtain
\begin{center}
\begin{tikzpicture}
\node[draw=red,ultra thick,rectangle] at (-4.1,0)
{$\cancel{t^{(3,2,4,1)}(1-t_1)(1-t_2)} + \cancel{t^{(3,3,3,1)}(1-t_1)(1-t_2)}$};
\node at (0,0) {$+$};
\node[draw=red,ultra thick,rectangle] at (2.1,0)
{$\cancel{t^{(4,2,3,1)}(1-t_1)(1-t_2)}.$};
\node at (-4.1,-.75) {\color{red}\bfseries Type A};
\node at (2.1,-.75) {\color{red}\bfseries Type A};
\end{tikzpicture}
\end{center}
where every term $\T{\lambda}{I}{}$ with $I\in\Comp_3(2)$ has appeared in a cancelling segment of Type A, as required by Proposition \ref{prop:Seg.Cancel}(a). For $k=4$, we begin with cancellations for $t_3$ and $t_4$
\begin{center}
\begin{tikzpicture}
\node at (0,1.75){\color{ForestGreen}\bfseries Type B};
\node[draw=ForestGreen,ultra thick,rectangle] at (0,1)
{$\cancel{t^{(3,2,2,3)}(1-t_1)(1-t_2)(1-t_3)} + {\color{ForestGreen}t^{(3,2,3,2)}(1-t_1)(1-t_2)(1-t_3)}$};
\node at (0,0)
{$ + t^{(3,3,2,2)}(1-t_1)(1-t_2)(1-t_3) + t^{(4,2,2,2)}(1-t_1)(1-t_2)(1-t_3).$};
\end{tikzpicture}
\end{center}
where we have not manipulated the last two terms because they are both $3$-sorted and do not correspond to sequences $J\in\Seg_3(I)$ for any $I\in\Bad_{4,3}$.
Now, on the remaining terms we do cancellations for $t_2$ and $t_3$:
\begin{center}
\begin{tikzpicture}
\node[draw=red,ultra thick,rectangle] at (-5.3,0)
{$\cancel{\color{ForestGreen}t^{(3,2,3,2)}(1-t_1)(1-t_2)(1-t_3)} + {\color{red}t^{(3,3,2,2)}(1-t_1)(1-t_2)\cancel{(1-t_3)}.}$};
\node at (0,0) {$+$};
\node[draw=red,ultra thick,rectangle] at (2.7,0)
{${\color{red}t^{(4,2,2,2)}(1-t_1)(1-t_2)\cancel{(1-t_3)}}$};
\node at (-5.3,-.75) {\color{red}\bfseries Type C};
\node at (2.7,-.75) {\color{red}\bfseries Type C};
\end{tikzpicture}
\end{center}
Finally, the middle term above is not $1$-sorted, and so we are left with
\begin{center}
\begin{tikzpicture}
\node[draw=cyan,ultra thick,rectangle] at (-2,0)
{${\color{cyan}t^{(3,3,2,2)}(1-t_1)\cancel{(1-t_2)}}$};
\node at (2,0)
{$+\, t^{(4,2,2,2)}(1-t_1)(1-t_2)$.};

\node at (-2,-.75) {\color{cyan}\bfseries Type C};
\end{tikzpicture}
\end{center}
For $k=5$ we obtain
\begin{center}
\begin{tikzpicture}
\node at (0,1.75) {\color{Purple}\bfseries Type B};
\node[draw=Purple,ultra thick,rectangle] at (0,1)
	{$\cancel{t^{(3,2,2,1,2)}(1-t_1)(1-t_2)(1-t_3)(1-t_4)} + {\color{Purple}t^{(3,2,2,2,1)}(1-t_1)(1-t_2)(1-t_3)(1-t_4)}$};
\node at (0,0)
	{$ +\, t^{(3,2,3,1,1)}(1-t_1)(1-t_2)(1-t_3)(1-t_4) + t^{(3,3,2,1,1)}(1-t_1)(1-t_2)(1-t_3)(1-t_4)$};
\node at (0,-1)
	{$+\, t^{(4,2,2,1,1)}(1-t_1)(1-t_2)(1-t_3)(1-t_4).$};
\end{tikzpicture}
\end{center}
Then on the remainder we have
\begin{center}
\begin{tikzpicture}
\node at (0,1.75) {\color{ForestGreen}\bfseries Type C};
\node[draw=ForestGreen,ultra thick,rectangle] at (0,1)
	{${\color{ForestGreen}t^{(3,2,2,2,1)}(1-t_1)(1-t_2)(1-t_3)\cancel{(1-t_4)}}$};
\node at (0,0)
	{$ +\, t^{(3,2,3,1,1)}(1-t_1)(1-t_2)(1-t_3)(1-t_4)  + t^{(3,3,2,1,1)}(1-t_1)(1-t_2)(1-t_3)(1-t_4)$};
\node at (0,-1)
	{$ +\, t^{(4,2,2,1,1)}(1-t_1)(1-t_2)(1-t_3)(1-t_4),$};
\end{tikzpicture}
\end{center}
and on the above remaining terms we obtain
\begin{center}
\begin{tikzpicture}
\node at (0,1.75) {\color{red}\bfseries Type C};
\node[draw=red,ultra thick,rectangle] at (0,1)
	{${\color{red}t^{(3,2,2,2,1)}(1-t_1)(1-t_2)\cancel{(1-t_3)}}$};
\node[draw=red,ultra thick,rectangle] at (0,0)
	{$+\, \cancel{t^{(3,2,3,1,1)}(1-t_1)(1-t_2)(1-t_3)(1-t_4)}  +  {\color{red}t^{(3,3,2,1,1)}(1-t_1)(1-t_2)\cancel{(1-t_3)}(1-t_4)}$};
\node[draw=red,ultra thick,rectangle] at (0,-1)
	{$ +\, {\color{red} t^{(4,2,2,1,1)}(1-t_1)(1-t_2)\cancel{(1-t_3)}(1-t_4).}$};
\end{tikzpicture}
\end{center}
The remaining middle term is not $1$-sorted, so we perform a final Type C cancellation,
\begin{center}

\begin{tikzpicture}
\node at (0,1) {$t^{(3,2,2,2,1)}(1-t_1)(1-t_2)$};
\node[draw=cyan,ultra thick,rectangle] at (0,0)
	{$+\,{\color{cyan}t^{(3,3,2,1,1)}(1-t_1)\cancel{(1-t_2)}(1-t_4)}$};
\node at (0,-1) {$ +\, t^{(4,2,2,1,1)}(1-t_1)(1-t_2)(1-t_4)$.};
\end{tikzpicture}
\end{center}
In the end, we are left with the following sum of sorted polynomials
\begin{equation}
\label{eqn:ex.final.polynomial}
\begin{split}
t^{(5,2,2,1)} &+ t^{(4,3,2,1)}(1-t_1) + t^{(3,3,2,2)}(1-t_1) \\
	&+ t^{(4,2,2,2)}(1-t_1)(1-t_2) + t^{(3,2,2,2,1)}(1-t_1)(1-t_2) \\ 
	&+ t^{(3,3,2,1,1)}(1-t_1)(1-t_4) + t^{(4,2,2,1,1)}(1-t_1)(1-t_2)(1-t_4)
\end{split}
\end{equation}
which, after applying the $\GG_\bfst$ operator, becomes
\begin{multline}
\label{eqn:ex.final.G.polynomial}
G_{(5,2,2,1)}+G_{(4,3,2,1)} + G_{(3,3,2,2)}+G_{(4,2,2,2)}+G_{(3,2,2,2,1)}+G_{(3,3,2,1,1)}+G_{(4,2,2,1,1)}\\
-G_{(5,3,2,1)}-2 G_{(4,3,2,2)}-G_{(5,2,2,2)}-2 G_{(4,2,2,2,1)} -2 G_{(3,3,2,2,1)}-2 G_{(4,3,2,1,1)}-G_{(5,2,2,1,1)}\\
+G_{(5,3,2,2)}+3 G_{(4,3,2,2,1)}+G_{(5,3,2,1,1)}+G_{(5,2,2,2,1)}-G_{(5,3,2,2,1)}.
\end{multline}

\section{Expansions of single Grothendieck polynomials in the Schur basis}
\label{s:G.poly.S.expand}

Throughout the remainder of the paper, we restrict attention to the \emph{single} stable Grothendieck polynomial, i.e.~$\beta_j=1$ for all $j$ in Equation \eqref{eqn:defn.G}. Moreover, we specialize to the variables $\bfs{x}$ with $x_i = 1-\alpha_i^{-1}$ and fix a positive integer $k$ so that $\bfs{x} = (x_1,\ldots,x_k)$. Then, the polynomial $G_\lambda(\bfs{x})$ is a symmetric function in the $\bfs{x}$ variables and, as such, admits an expansion in the Schur basis. This expansion depends on the number $k$, although in the limit $k\to\infty$ the coefficient of $s_\mu$ (for fixed $\mu$) is stable. 

\subsection{Relating the $\GG_\bfst$ and $\SSS_\bfst$ operators}
\label{ss:GG.relate.SSS}

Following \cite[Section~3]{ab2002.qv}, for each integer $i\geq 0$ define symmetric functions $h^{(i)}_d(\bfs{x})$ via the generating function
\begin{equation}
\label{eqn:higen}
\sum_{d\geq 0} h^{(i)}_d(\bfs{x}) \, u^d = 
\frac{(1-u)^i}{\prod_{j=1}^k (1-x_j u)}.
\end{equation}
The polynomials $h^{(i)}_d$ are non-homogeneous generalizations of the complete homogeneous symmetric functions $h_d$, where in particular, $h^{(0)}_d = h_d$. For any finite integer sequence $I = (I_1,\ldots,I_p)$ with $k\geq p$, define the determinant
\begin{equation}\label{eqn:gIdefn}
g_I(\bfs{x}) = (-1)^{k(k-1)/2}\det\left( h^{(i-1)}_{I_i+j-1}(\bfs{x})\right)_{1\leq i,j\leq k}
\end{equation}
where we agree that $I_r = 0$ for $r>p$. Notice that the size of the determinant depends on the number variables in the alphabet $\bfs{x}$. We have the following relationship to Grothendieck polynomials.

\begin{prop}[\cite{ab2002.qv} Theorem 3.1, \cite{cl2000} Theorem 2.4]
\label{prop:Lenart}
For any partition $\lambda$ with $\ell(\lambda)\leq k$,
\begin{equation}
\label{eqn:Gpoly.det} 
\pushQED{\qed}
G_\lambda(\bfs{x}) =  g_\lambda(\bfs{x}). %\nonumber
\qedhere\popQED
\end{equation}
\end{prop}

\begin{remark}
\label{rem:GI.det.vs.GI.IR}
In \cite[Section~3]{ab2002.qv}, Equation \eqref{eqn:Gpoly.det} is used to define $G_I$ for general integer sequences by replacing $\lambda$ with $I$. As referenced in Section \ref{ss:G.Straighten}, the complexity of this formula depends on $k$, the number of $\bfs{x}$ variables. The complexity of our iterated residue formula \eqref{eqn:defn.G} depends on $p$, the length of the sequence $I$.
\end{remark}

In what follows it is convenient to define another iterated residue operation which encodes data regarding the complete homogeneous symmetric functions $h_d(\bfs{x})$. In particular, for any integer sequence $I=(I_1,\ldots,I_p)$ we define
\begin{equation} \label{eqn:HHdefn}
\HH_{\bfs{t}}\left( {t}^{I} \right)({\bfs{x}})
	= h_I(\bfs{x}) := \prod_{i=1}^p  h_{I_{i}}(\bfs{x}) \nonumber
\end{equation}
and extend the operation to linearly to $\Z[\bfst^{\pm1}]$ just as for the $\SSS_\bfst$ and $\GG_\bfst$ operations. We may omit the reference to the variables $\bfs{x}$ when they are not explicitly needed.

\begin{lem}\label{lem:JTS}
For any Laurent polynomial $f(t_1,\ldots,t_p)$, we have 
\[
\pushQED{\qed}
\SSS_{\bfs{t}}(f) = \HH_{\bfs{t}}\left(f \cdot \prod_{1\leq i<j\leq p} \left(1-\frac{t_i}{t_j}\right)\right). 
\qedhere\popQED 
\]
\end{lem}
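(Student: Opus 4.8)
The plan is to first prove the identity for a single monomial $f = t^I = \prod_{i=1}^n t_i^{I_i}$ and then extend to all of $\Z(\al{t})$ by $\Z$-linearity, since both $\SSS_{\al{t}}^{\al{x}}$ and the assignment $f \mapsto \HH_{\al{t}}^{\al{x}}(f\cdot\prod_{i<j}(1-t_i/t_j))$ are $\Z$-linear and the factor $\prod_{1\le i<j\le n}(1-t_i/t_j)$ is a fixed Laurent polynomial. Thus if $f = \sum_L c_L\, t^L$ is the Laurent expansion of $f$ on the conventional domain $t_i<t_j$ (see Section~\ref{s:G.poly.IR}), then $f\cdot\prod_{i<j}(1-t_i/t_j) = \sum_L c_L\, t^L\prod_{i<j}(1-t_i/t_j)$ is again a legitimate Laurent expansion on the same domain, obtained by termwise multiplication, and one may apply $\HH_{\al{t}}^{\al{x}}$ term by term.

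For monomials the heart of the matter is the Vandermonde-type identity
\[
\prod_{1\le i<j\le n}\left(1-\frac{t_i}{t_j}\right) = \det\left(t_i^{\,j-i}\right)_{1\le i,j\le n} = \sum_{\sigma\in S_n}\operatorname{sgn}(\sigma)\prod_{i=1}^n t_i^{\,\sigma(i)-i}.
\]
The first equality follows by writing $1-t_i/t_j = (t_j-t_i)/t_j$, so that the left-hand side equals $\frac{\prod_{1\le i<j\le n}(t_j-t_i)}{\prod_{j=1}^n t_j^{\,j-1}}$; the numerator is the Vandermonde determinant $\det(t_i^{\,j-1})_{1\le i,j\le n}$, and since $\prod_{j=1}^n t_j^{\,j-1} = \prod_{i=1}^n t_i^{\,i-1}$ involves only the row variable $t_i$ to the power $i-1$, dividing row $i$ of the matrix by $t_i^{\,i-1}$ turns the quotient into $\det(t_i^{\,j-i})_{1\le i,j\le n}$. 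The second equality is just the Leibniz expansion of this determinant.

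Multiplying the identity by $t^I$ gives $t^I\prod_{1\le i<j\le n}(1-t_i/t_j) = \sum_{\sigma\in S_n}\operatorname{sgn}(\sigma)\prod_{i=1}^n t_i^{\,I_i+\sigma(i)-i}$. Applying $\HH_{\al{t}}^{\al{x}}$ and using its definition on monomials together with $\Z$-linearity, the right-hand side becomes $\sum_{\sigma\in S_n}\operatorname{sgn}(\sigma)\prod_{i=1}^n h_{I_i+\sigma(i)-i}(\al{x}) = \det\bigl(h_{I_i+j-i}(\al{x})\bigr)_{1\le i,j\le n}$, which is exactly the (possibly fake) Schur function $s_I(\al{x})$ by the Jacobi--Trudi formula \eqref{eqn:JT} and its extension to non-partition sequences. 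Since $\SSS_{\al{t}}^{\al{x}}(t^I) = s_I(\al{x})$ by definition, this establishes the lemma on monomials, and the linear extension described above then gives it for arbitrary $f$.

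The step I expect to require the most care is the passage from monomials to rational functions: one must be sure that expanding $f\cdot\prod_{i<j}(1-t_i/t_j)$ as a Laurent series on the prescribed domain and then applying $\HH_{\al{t}}^{\al{x}}$ termwise genuinely agrees with first expanding $f$ and invoking the monomial case. This is clean precisely because $\prod_{i<j}(1-t_i/t_j)$ is an honest Laurent polynomial, so no rearrangement of infinite sums is needed and the conventions of Section~\ref{s:G.poly.IR} are respected throughout; alternatively, one may simply note that the statement is a reformulation of \cite{qr3}~Lemma~2.5 in the present $\HH$/$\SSS$ notation.
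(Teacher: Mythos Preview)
Your argument is correct. The Vandermonde manipulation
\[
\prod_{1\le i<j\le n}\Bigl(1-\tfrac{t_i}{t_j}\Bigr)=\frac{\prod_{i<j}(t_j-t_i)}{\prod_j t_j^{\,j-1}}=\det\bigl(t_i^{\,j-i}\bigr)_{1\le i,j\le n}
\]
is valid, and multiplying by $t^I$ and applying $\HH_{\al t}^{\al x}$ termwise does recover the Jacobi--Trudi determinant $\det(h_{I_i+j-i}(\al x))=s_I(\al x)$. Your remark about the extension to rational functions is also to the point: since $\prod_{i<j}(1-t_i/t_j)$ is a Laurent \emph{polynomial}, multiplying a Laurent series on the domain $t_i<t_j$ by it introduces no convergence or reordering issues, so linearity suffices.

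As for the comparison: the paper does not actually supply a proof here. The lemma is stated with a terminal $\qed$ and is simply attributed to \cite{qr3}, Lemma~2.5. Your write-up therefore provides strictly more than the paper does---a self-contained derivation rather than a citation---and your closing sentence already acknowledges the equivalence with the cited result. The approach you take (Vandermonde plus Leibniz expansion plus Jacobi--Trudi) is the natural one and is presumably what underlies the cited lemma as well.
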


\begin{proof}
The result is equivalent to \cite[Lemma~2.5]{rr2013} and \cite[I.(3.4$^{\prime\prime}$)]{im1995}.
\end{proof}

\begin{thm}
\label{thm:GG.to.SSS}
If $\lambda$ is a partition then
\begin{equation}
\label{eqn:GGtoSS}
\GG_{\bfs{t}}({t}^\lambda)(x_1,\ldots,x_k) = \SSS_{\bfs{t}}\left( {t}^\lambda \prod_{i=1}^k (1-t_i)^{i-1}\right)(x_1,\ldots,x_k). 
\end{equation}
\end{thm}

\begin{proof}
Set $p=\ell(\lambda)$. If $k<p$ it is known that both sides are zero, say by using the tableaux descriptions of $G_\lambda(\bfs{x})$ and $s_\lambda(\bfs{x})$ as in \cite{ab2002.klr,im1995}. In the case $k\geq p$,
an application of Lemma \ref{lem:JTS} means the result of the theorem is equivalent to
\begin{equation}
\label{eqn:GGtoHHpartition}
\GG_{\bfs{t}}\left( t^\lambda \right) = \HH_{\bfs{t}}\left( t^\lambda \prod_{1\leq i \leq k}(1-t_i)^{i-1} \prod_{1\leq i<j \leq k}\left( 1 - \frac{t_i}{t_j} \right)\right).
\end{equation}
We will prove the equality of \eqref{eqn:GGtoHHpartition}. Begin by observing that
\[
h^{(i)}_d(\bfs{x}) = \sum_{j=0}^i (-1)^j \binom{i}{j}h_{d-j}(\bfs{x}),
\]
which implies for any positive integer $r$ that
\[
h^{(i)}_d = \HH_{\bfs{t}}\left( t_r^d(1-1/t_r)^i \right) = \HH_{\bfs{t}}\left( t_r^{d-i}(t_r-1)^i\right).
\]
From this we can write the determinant $g_\lambda(\bfs{x})$ of Equation \eqref{eqn:gIdefn} as the iterated residue expression
\begin{equation}
\label{eqn:gIasIR}
g_\lambda(\bfs{x}) = \HH_{\bfs{t}} \left( \det\left( t_i^{\lambda_{i}+j-i}(t_i-1)^{i-1}\right)_{1\leq i,j \leq k}\right)({\bfs{x}})
\end{equation}
where we recall the convention that $\lambda_i = 0$ for $i>p$. Proposition \ref{prop:Lenart} implies that $G_\lambda(\bfs{x}) = g_\lambda(\bfs{x})$, so \eqref{eqn:gIasIR} allows us to compute the lefthand side of \eqref{eqn:GGtoHHpartition} by applying $\HH_{\bfs{t}}$ to the determinant
\[ 
	\begin{vmatrix} 
		t_1^{\lambda_1}						&t_1^{\lambda_1+1}					&\cdots	&t_1^{\lambda_1+k-1} 			\\
		t_2^{\lambda_2-1}(t_2-1)			&t_2^{\lambda_2  }(t_2-1)			&\cdots	&t_2^{\lambda_2+k-2}(t_2-1)	\\
		\vdots								&\vdots								&\ddots	&\vdots							\\
		t_k^{\lambda_k-k+1}(t_k-1)^{k-1}	&t_k^{\lambda_k-k+2}(t_k-1)^{k-1}	&\cdots	&t_k^{\lambda_k}(t_k-1)^{k-1}
	\end{vmatrix}
\]
which can be rewritten as
\[
t^\lambda\prod_{i=1}^{k}t_i^{-i+1}(t_i-1)^{i-1}
	\begin{vmatrix} 
		1		&t_1	&\cdots	&t_1^{k-1} 	\\
		1		&t_2	&\cdots	&t_2^{k-1}	\\
		\vdots	&\vdots	&\ddots	&\vdots		\\
		1		&t_k	&\cdots	&t_k^{k-1}	\\
	\end{vmatrix}.
\]
The \emph{Vandermonde determinant} above is equal to $\prod_{1\leq i<j \leq k}(t_j - t_i)$. Hence, after switching the sign in each factor of $(t_i-1)$ above, one obtains the expression 
\[
	(-1)^{k(k-1)/2}\,
	t^\lambda 	\prod_{i=1}^k (1-t_i)^{i-1}
				\prod_{i=1}^k t_i^{-i+1}
				\prod_{1\leq i<j\leq k}(t_j - t_i)
\]
and this is further equal to
\[
	(-1)^{k(k-1)/2}\,
	t^\lambda \prod_{i=1}^k (1-t_i)^{i-1} \prod_{1\leq i<j \leq k}\left(1 - \frac{t_i}{t_j}\right)
\]
from which the desired equality follows.
\end{proof}

\begin{remark}
The result of Theorem \ref{thm:GG.to.SSS} still holds even when $\lambda$ is not a partition with an additional hypothesis regarding the size $k$ of the alphabet $\bfs{x}$. For more details see the thesis of the first author \cite[Section~3]{ja2014.th}. However, we will not need the more general result in the sequel.
\end{remark}

\begin{remark}\label{rem:whyIR}
Notice that for a Laurent polynomial $f(\bfs{t})$, the result of the operation $\HH_{\bfs t}(f)(\bfs{x})$ is equivalent to taking the constant term (in the $\bfs{t}$ variables) of the formal series
\[
f(\bfs{t}) \prod_{t\in\bfs{t}}\sum_{d\geq 0}\frac{h_d(\bfs x)}{t^d} = \frac{f(\bfs t)}{ \prod_{t\in \bfs t} \prod_{x\in \bfs x}\left( 1-\frac{x}{t}\right)}.
\]
Furthermore, taking the constant term of this series is equivalent to taking the successive residues at $t_1=\infty$, $t_2=\infty$, \emph{et cetera}, of a series with shifted exponents. This observation is another reason for the name \emph{iterated residue operation}. From one point of view, the results of Lemma \ref{lem:JTS} and Theorem \ref{thm:GG.to.SSS} justify this terminology for the $\SSS_\bfst$ and $\GG_\bfst$ operations, respectively. On the other hand, the $\GG_\bfst$ operation already shares an important connection with the residues at both infinity \emph{and} zero via Equation \eqref{eqn:defn.G}. 

The philosophy is that residues at both zero and infinity are required for computations in $K$-theory, but only residues at infinity are required for cohomological computations (c.f.~\cite{as1998,as2003.ver,gbas2012,rr2013,mz2014,ja2014.ir,rras2017}). Because the present context relies heavily on polynomials $s_\lambda$ and $h_d$, we should expect the computations to reflect information in cohomology. In particular, the functions $s_\lambda$ and $h_d$ (evaluated on Chern roots of tautological quotient bundles) are respectively representatives of Schubert and Chern classes in the cohomology of the Grassmannian.
\end{remark}

\begin{remark}
The reader may also notice the resemblance between Lemma \ref{lem:JTS} and the ``raising operator'' formula, c.f.~\cite[I.(3.4$^{\prime\prime}$)]{im1995}. However, the connection between ``exponent to subscript'' operations and contour integration via \eqref{eqn:GGtoSS} is new.
\end{remark}

\subsection{Positivity for Schur expansions of single Grothendieck polynomials}
\label{ss:pos.GG.to.SS}

We now restate another theorem of Lenart and devote this section to proving it with iterated residue techniques.

\begin{thm}[Lenart \cite{cl2000}, Theorem~2.8]
\label{thm:GG.to.SS.alt}
For any partition $\lambda$ and variables $\bfs{x} = (x_1,\ldots,x_k)$ with $k\geq \ell(\lambda)$, one has
\begin{equation}\label{eqn:Schur.Alt}
G_\lambda(\bfs{x}) 
	= \sum_{\lambda\subseteq\mu\subseteq\hat\lambda} 
		(-1)^{|\mu|-|\lambda|} a_{\lambda\mu} s_\mu(\bfs{x})
\end{equation}
where the sum is taken over partitions $\mu$ and each coefficient $a_{\lambda\mu}$ is non-negative. The partition $\hat{\lambda}$ denotes the unique maximal partition of length $k$ obtained by adding at most $j-1$ boxes to the $j$-th row of the Young diagram of $\lambda$.
\end{thm}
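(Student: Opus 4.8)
The plan is to use Theorem~\ref{thm:GG.to.SS}, which gives $G_\lambda(\al x) = \SSS_{\al t}^{\al x}\!\left(t^\lambda \prod_{i=1}^M d_i^{\,i-1}\right)$, and then to expand the product $\prod_{i=1}^M d_i^{\,i-1} = \prod_{i=1}^M (1-t_i)^{i-1}$ by the binomial theorem. Each $(1-t_i)^{i-1}$ contributes a factor $\sum_{k_i=0}^{i-1} (-1)^{k_i}\binom{i-1}{k_i} t_i^{k_i}$, so distributing yields
\[
G_\lambda(\al x) = \sum_{\substack{0\le k_i\le i-1}} (-1)^{|k|} \left(\prod_{i=1}^M \binom{i-1}{k_i}\right) \SSS_{\al t}^{\al x}\!\left(t^{\lambda+k}\right)
= \sum_{\substack{0\le k_i\le i-1}} (-1)^{|k|} \left(\prod_{i=1}^M \binom{i-1}{k_i}\right) s_{\lambda+k}(\al x),
\]
where $k=(k_1,\ldots,k_M)$, $|k|=\sum k_i$, and $\lambda$ is padded with zeros to length $M$. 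Since $0\le k_i\le i-1$, each index sequence $\lambda+k$ satisfies $\lambda_i \le \lambda_i+k_i \le \lambda_i + i - 1$, so the ``target'' of each straightened term is (componentwise) sandwiched between $\lambda$ and $\hat\lambda$; the definition of $\hat\lambda$ as the maximal partition obtained by adding at most $j-1$ boxes to row $j$ is exactly tailored to this bound. The first step is therefore to record this expansion and note $|k| = |\lambda+k| - |\lambda|$, so the overall sign of a term attached to the sequence $I=\lambda+k$ is $(-1)^{|I|-|\lambda|}$.

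The real work is the straightening: a fake Schur function $s_I$ is either $0$ or $\pm s_\mu$ for a unique partition $\mu$, and the sign it picks up is a transposition sign from reordering $(I_i+M-i)$ into strictly decreasing order. So after grouping, $a_{\lambda\mu} = \sum_{I} \varepsilon(I)\prod_i \binom{i-1}{k_i}$, the sum over all $I=\lambda+k$ in the box $[\lambda,\hat\lambda]$ that straighten to $\mu$, where $\varepsilon(I)\in\{0,\pm1\}$ is the straightening sign. The claim to be proven is that this alternating-looking sum is in fact $\ge 0$ once we have factored out the global sign $(-1)^{|\mu|-|\lambda|}$ (note $|I|=|\mu|$ for every $I$ contributing to $a_{\lambda\mu}$, so the factored sign is the same $(-1)^{|k|}$ for all of them). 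Equivalently: for fixed $\mu$, $\sum_{I\mapsto\mu}\varepsilon(I)\prod_i\binom{i-1}{k_i}\ge 0$. I expect this to be the main obstacle — it is a genuine combinatorial cancellation/sign-reversing-involution problem, not a formal manipulation.

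To handle it I would look for a sign-reversing involution on the set of ``non-partition'' index sequences $I=\lambda+k$ in the box that kills the $\varepsilon(I)=-1$ contributions against $\varepsilon(I)=+1$ contributions, respecting the weights $\prod_i\binom{i-1}{k_i}$ and the straightening target $\mu$. The natural candidate, in the spirit of Section~\ref{s:G.Pieri}, is to mimic the canceling-segment technique: work inside the $\SSS$-operation rather than $\GG$, where the straightening law $s_{\ldots,a,b,\ldots} = -s_{\ldots,b-1,a+1,\ldots}$ (Equation~\eqref{eqn:JT.rec}) and its consequence $s_{\ldots,a,a+1,\ldots}=0$ (Equation~\eqref{eqn:JT.zero}) are cleaner than their Grothendieck analogues. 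Concretely, for a sequence $I$ that fails to be weakly decreasing at position $(r,r+1)$, one rewrites the relevant slice $\binom{r-1}{k_r}\binom{r}{k_{r+1}} t_r^{\lambda_r+k_r}t_{r+1}^{\lambda_{r+1}+k_{r+1}}$ together with the other summands having the same value of $k_r+k_{r+1}$ and the same entries elsewhere, and shows (using the Vandermonde/Chu identity $\sum_j \binom{r-1}{k_r}\binom{r}{k_{r+1}}$ over a fixed total, or a telescoping from \eqref{eqn:JT.rec}) that these either cancel in pairs or collapse to a single term whose straightened index is weakly larger and still in the box. Iterating over $r$ from top to bottom, exactly as in the proof of Theorem~\ref{thm:KPieri}, should reduce the whole sum to an explicitly nonnegative combination $\sum_\mu (-1)^{|\mu|-|\lambda|} a_{\lambda\mu} s_\mu$ with each $a_{\lambda\mu}$ a sum of products of binomial coefficients — manifestly $\ge 0$. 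The bookkeeping that each intermediate term stays inside $[\lambda,\hat\lambda]$ is precisely the content of inductive conditions analogous to (I) and (II) in the Pieri proof, and verifying that this bound is never violated (so that no spurious partitions appear and $\hat\lambda$ is genuinely the maximal one) is the delicate point I would budget the most care for.
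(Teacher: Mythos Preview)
Your starting point via Theorem~\ref{thm:GG.to.SS} is correct, and the full binomial expansion is valid; but the step you yourself flag as ``the main obstacle'' is where the argument is genuinely incomplete, and the fix you sketch is not sharp enough to close it. After expanding $\prod_i d_i^{\,i-1}$ completely, the weights $\prod_i\binom{i-1}{k_i}$ attached to the non-partition sequences $\lambda+k$ do \emph{not} organize into canceling segments the way they did in the Pieri proof: there every summand carried coefficient~$1$, so pairing via \eqref{eqn:JT.rec} was weight-preserving, whereas here the would-be partners along a segment at position $(r,r+1)$ carry \emph{different} binomial weights $\binom{r-1}{k_r}\binom{r}{k_{r+1}}$, and neither Chu--Vandermonde nor a naive telescoping collapses them. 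The sign-reversing involution you hope for therefore does not obviously exist at the level of individual monomials, and you have not constructed it.

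The paper sidesteps this difficulty by \emph{not} expanding the $d_i$ factors. The device you are missing is Lemma~\ref{lem:SSS.JT} (and its corollary Lemma~\ref{lem:SSS.ker}): inside the $\SSS$-operation one may apply the straightening \eqref{eqn:JT.rec} to a monomial multiplied by any factor \emph{symmetric} in $t_{k-1},t_k$. This is packaged as Lemma~\ref{lem:SS.expand}: when $I_k-I_{k-1}=1$, peel off a single factor $(1-t_k)$ from $d^I$; the remainder $d^{\epsilon_k^-(I)}$ is symmetric in $t_{k-1},t_k$, so the extra term $t^{\epsilon_k^+(\lambda)}d^{\epsilon_k^-(I)}$ either vanishes (when $\lambda_{k-1}=\lambda_k$, by Lemma~\ref{lem:SSS.ker}) or already has a partition exponent (when $\lambda_{k-1}>\lambda_k$). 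Iterating this in the order $\al{k}=k_M k_{M-1}\cdots k_2$ with $k_j=(j,j+1,\ldots,M)$ keeps every intermediate exponent a genuine partition and introduces a minus sign precisely when the weight grows by one; the alternation and the bound $\mu\subseteq\hat\lambda$ then fall out with no straightening of fake Schur functions at all. The moral: keep the $d$-factors intact and exploit their symmetry, rather than expanding first and trying to reassemble afterwards.
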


We remark that if $k<\ell(\lambda)$ then both sides above are zero. The following lemma plays a role analogous to Theorem \ref{thm:GG.Straighten} but for the $\SSS_\bfst$ operation.

\begin{lem}
\label{lem:SSS.JT}
If $f(\bfs{t})$ is a polynomial symmetric in $t_i$ and $t_{i+1}$, then
\[
\SSS_\bfst( t_i^a t_{i+1}^b \cdot f )  =  
	\SSS_\bfst (- t_i^{b-1} t_{i+1}^{a+1} \cdot f ).
\]
\end{lem}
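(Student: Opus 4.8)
The plan is to reduce the statement to the analogous straightening law for (fake) Schur functions, namely equation~\eqref{eqn:JT.rec}, by expanding $f$ into monomials and tracking how the $\SSS$-operation acts on each. Since $\SSS_{\al t}$ is linear and $f$ is symmetric in $t_i$ and $t_{i+1}$, it suffices to treat $f$ that is itself a single symmetric monomial in those two variables, i.e.\ $f = t_i^c t_{i+1}^d + t_i^d t_{i+1}^c$ (times a monomial in the remaining variables, which we may suppress since it only contributes fixed outer parts to the integer sequences and passes through the argument unchanged).

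First I would write, for each monomial $t^K$ appearing, $\SSS_{\al t}(t^K) = s_K$, the (possibly fake) Schur function of the integer sequence $K$, and recall the straightening law $s_{\dots,a,b,\dots} = -s_{\dots,b-1,a+1,\dots}$ from \eqref{eqn:JT.rec}, applied in the slots $i$ and $i+1$. Applying this to $t_i^{p+c} t_{i+1}^{q+d}$ gives $s_{\dots,p+c,q+d,\dots} = -s_{\dots,q+d-1,p+c+1,\dots}$, which is exactly $-\SSS_{\al t}(t_i^{q+d-1}t_{i+1}^{p+c+1})$. The key bookkeeping step is then to check that the two monomials coming from the symmetric $f$ on the left-hand side, after multiplying by $t_i^p t_{i+1}^q$, get sent by \eqref{eqn:JT.rec} to precisely the two monomials coming from the (same) $f$ on the right-hand side after multiplying by $t_i^{q-1}t_{i+1}^{p+1}$ — so that the identity holds termwise. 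Concretely, $t_i^p t_{i+1}^q \cdot t_i^c t_{i+1}^d \mapsto -s$ with sequence $(q+d-1, p+c+1)$ in slots $i,i+1$, while $t_i^{q-1}t_{i+1}^{p+1}\cdot t_i^d t_{i+1}^c$ has sequence $(q+d-1, p+c+1)$ in those slots as well; similarly for the swapped term. Hence the left side is $\SSequiv$ to minus the right side, as claimed.

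The main obstacle — really the only subtlety — is making the symmetry hypothesis on $f$ do its job cleanly: one must verify that the involution $(a,b)\mapsto(b-1,a+1)$ on the pair of exponents in slots $i,i+1$, induced by \eqref{eqn:JT.rec}, matches the transposition of $t_i$ and $t_{i+1}$ in $f$ combined with the shift coming from the prefactor $t_i^p t_{i+1}^q$ versus $t_i^{q-1}t_{i+1}^{p+1}$. This is a short index computation, but it is where the hypothesis "symmetric in $t_i$ and $t_{i+1}$" is genuinely used, so I would state it explicitly rather than treat it as routine. Everything else is linearity of $\SSS$ and the definition of the $\SSS$-operation on monomials.
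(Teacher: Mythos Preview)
Your proposal is correct and follows essentially the same route as the paper's proof: both reduce by linearity to the case where $f$ is a monomial symmetric function $t_i^c t_{i+1}^d + t_i^d t_{i+1}^c$ in the two relevant variables (with the remaining variables carried along passively), and then verify the identity by applying the straightening law \eqref{eqn:JT.rec} to each of the two resulting monomials and matching exponents. The index computation you outline is exactly the one the paper performs.
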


\begin{proof}
When $f=1$, we note that the result follows from interchanging rows in the Jacobi--Trudi determinant \eqref{eqn:Jacobi.Trudi}. Otherwise, the result follows from Remark \ref{rem:low.deg.of.GG} and Corollary \ref{cor:GG.Straighten.symmetric}. 
\end{proof}

Observe that when $b=a+1$, then Lemma \ref{lem:SSS.JT} implies that
\begin{equation}
\label{eqn:SSS.vanish}
\SSS_\bfst( t_i^a t_{i+1}^{a+1} \cdot f ) = 0,
\end{equation}
a fact we will use in the sequel. In the remainder of the paper, we adopt the notation $(1-t)^J := \prod_{i=1}^k (1-t_i)^{J_i}$ for an integer sequence $J = (J_1,\ldots,J_k)$.

\begin{lem}
\label{lem:SS.expand}
Let $k\geq p$ and $\lambda$ a partition with $\ell(\lambda)=p$. Suppose that $I = (I_1,\ldots,I_k)$ is an integer sequence such that $I_{j} = I_{j-1} + 1$ for some $1\leq j \leq k$.
\begin{enumerate}[label=\emph{(\roman*)},leftmargin=*]
\item If $\lambda_{j-1} = \lambda_{j}$, then
\[
\SSS_\bfst \left( t^\lambda (1-t)^I \right) =  
	\SSS_\bfst  \left( t^\lambda (1-t)^{I-e_j} \right);
\]
\item if $\lambda_{j-1} > \lambda_{j}$ then
\[
\SSS_\bfst \left( t^\lambda (1-t)^{I} \right) =
	\SSS_\bfst \left( 
		t^\lambda (1-t)^{I-e_j}
	- t^{\lambda+e_j} (1-t)^{I-e_j} \right);
\]
\end{enumerate}
where $e_j$ denotes the integer sequence with $1$ in the $j$-th entry and zeroes elsewhere. Furthermore, $I-e_j$ is also a weakly increasing sequence and, in case \emph{(ii)}, $\lambda + e_j$ is still a partition.
\end{lem}

\begin{proof}
For notational simplicity we provide the proof for the case that $k=p=2$; the general proof is analogous. Write $I = (c,c+1)$ and $\lambda = (a,b)$. Thus $I-e_2 = (c,c)$ and $\lambda+e_2 = (a,b+1)$. We have
\begin{multline*}
t^\lambda (1-t)^I = t_1^a t_2^b (1-t_1)^c(1-t_2)^{c+1} 
	= t_1^a t_2^b (1-t_1)^p (1-t_2)^p - t_1^a t_2^{b+1} (1-t_1)^p(1-t_2)^p \\ = t^{\lambda}(1-t)^{I-e_2} - t^{\lambda+e_2}(1-t)^{I-e_2}.
\end{multline*}
Apply the operation $\SSS_{\bfs{t}}$ to both sides above and observe that in the case $a=b$, the second term of the last expression vanishes by \eqref{eqn:SSS.vanish}. In the case $a>b$, this term remains.
\end{proof}

We are now ready to establish that the Schur expansion of a stable Grothendieck polynomial alternates as desired. Recall that for a partition $\lambda$ of length $p$, $k\geq p$, $I=(0,1,2,\ldots,k-1)$, and $\bfs{x} = (x_1,\ldots,x_k)$, Theorem \ref{thm:GG.to.SSS} asserts that
\begin{equation}
\label{eqn:GG.SS.recall}
\GG_{\bfs{t}} \left(t^\lambda\right)
=
\SSS_{\bfs{t}} \left(t^\lambda (1-t)^I\right).
\end{equation}

\begin{proof}[Proof of Theorem \ref{thm:GG.to.SS.alt}]
First observe that the polynomial inside the $\SSS_{\bfs{t}} $ operation of \eqref{eqn:GG.SS.recall} satisfies the hypothesis of Lemma \ref{lem:SS.expand} for the weakly increasing sequence $I = (0,1,2,\ldots,k-1)$.

For each integer $2\leq m\leq k$ define $\mathbf{j}(m) = (m, m+1, \ldots, k)$ and form the sequence
\[
\mathbf{j}= \mathbf{j}(k), \mathbf{j}({k-1}), \mathbf{j}({k-2}), \ldots, \mathbf{j}(2).
\]
Now we will repeatedly apply Lemma \ref{lem:SS.expand} by successively choosing $j$ from the sequence $\mathbf{j}$. To begin, we apply the lemma with $j=k=\mathbf{j}_1$ to $t^\lambda (1-t)^I$ to obtain the sum
\[
t^\lambda (1-t)^{I-e_k} - \delta_{\lambda_{k},\lambda_{k-1}}\cdot t^{\lambda + e_k} (1-t)^{I-e_k}
\]
The lemma can again be applied to both terms for $j=k-1 = \mathbf{j}_2$. Similarly, suppose we are give a given a sum of terms $t^{\lambda'}(1-t)^{I'}$ with $I'_{\mathbf{j}_w} = I'_{\mathbf{j}_{w-1}}+1$ and $\lambda'$ a partition. Then applying Lemma \ref{lem:SS.expand} with $j=\mathbf{j}_w$ gives a sum of polynomials
\[
t^{\lambda'}(1-t)^{I'-e_{\mathbf{j}_w}} - \delta_{\lambda_{\mathbf{j}_w},\lambda_{\mathbf{j}_w-1}}\cdot 
	t^{\lambda'+e_{\mathbf{j}_w}}(1-t)^{I'-e_{\mathbf{j}_w}}
\]
where we observe that both terms above will satisfy the hypotheses for Lemma \ref{lem:SS.expand} with $j=\mathbf{j}_{w+1}$ (notice we have carefully chosen the order for the sequence $\mathbf{j}$ to ensure this is true). We inductively continue this process until $|I'| = 0$ to obtain a sum of monomials $b_\mu t^{\mu}$ for partitions $\mu$. Indeed, notice that each $2\leq j \leq k$ appears $j-1 = I_j$ times in $\mathbf{j}$.

Hence, by our process each $\mu$ with $b_\mu\neq 0$ is of the form $\lambda + \sum_{j=1}^k B_j e_j$ for some integers $0 \leq B_j \leq j-1$. This implies, per case (ii) of Lemma \ref{lem:SS.expand}, that the sign of $b_\mu$ must be $(-1)^{\sum_{j=1}^k B_j} = (-1)^{|\mu|-|\lambda|}$ as desired. Moreover, the fact that every such  $\mu$ is contained in the interval $\lambda \subseteq \mu \subseteq \hat\lambda$ follows from the fact that $0\leq B_j \leq j-1$.
\end{proof}

\begin{example}
Consider the case $\lambda= (2)$ and $k=3$ so that $\bfs{x}=(x_1,x_2,x_3)$. Then the Schur expansion of $G_{(2)}(\bfs{x})$ can be computed by forming the sequences $\mathbf{j}(3)=(3)$, $\mathbf{j}(2) = (2,3)$, and hence $\mathbf{j}=(3,2,3)$. We then transform $t_1^2 (1-t_2)(1-t_3)^2$ according to the diagram below
\[
\begin{diagram}
t_1^2 (1-t_2)(1-t_3)^2 & \rTo^{\mathbf{j}_1=3} & t_1^2(1-t_2)(1-t_3)
 & \rTo^{\mathbf{j}_2=2} & t_1^2(1-t_3) - t_1^2t_2(1-t_3)
 & \rTo^{\mathbf{j}_3=3} & [t_1^2] - [t_1^2 t_2 - t_1^2 t_2 t_3].
\end{diagram}
\]
Notice that each monomial above corresponds to a Schur function $s_\lambda$ for a \emph{partition} once we apply the operation $\SSS_{\bfs{t}}$. In the end, we conclude that 
\[
G_{(2)}(\bfs{x}) = s_{(2)}(\bfs{x}) - s_{(2,1)}(\bfs{x}) + s_{(2,1,1)}(\bfs{x})
\]
and finally, observe that $(2,1,1)$ is indeed the partition $\hat{\lambda}$ corresponding to $\lambda = (2)$ and $k=3$.
\end{example}

\bibliographystyle{amsalpha}
\bibliography{jmabib}

\def\cprime{$'$}
\providecommand{\bysame}{\leavevmode\hbox to3em{\hrulefill}\thinspace}
\providecommand{\MR}{\relax\ifhmode\unskip\space\fi MR }
% \MRhref is called by the amsart/book/proc definition of \MR.
\providecommand{\MRhref}[2]{%
  \href{http://www.ams.org/mathscinet-getitem?mr=#1}{#2}
}
\providecommand{\href}[2]{#2}
\begin{thebibliography}{Buc02b}

\bibitem[All14a]{ja2014.ir}
J.~Allman, \emph{Grothendieck classes of quiver cycles as iterated residues},
  Michigan Math. J. \textbf{63} (2014), no.~4, 865--888. \MR{3286674}

\bibitem[All14b]{ja2014.th}
\bysame, \emph{K-classes of quiver cycles, {G}rothendieck polynomials, and
  iterated residues}, Ph.D. thesis, The University of North Carolina at Chapel
  Hill, 2014.

\bibitem[Bri02]{mb2002}
Michel Brion, \emph{Positivity in the {G}rothendieck group of complex flag
  varieties}, J. Algebra \textbf{258} (2002), no.~1, 137--159, Special issue in
  celebration of Claudio Procesi's 60th birthday. \MR{1958901}

\bibitem[BS12]{gbas2012}
G.~B{\'e}rczi and A.~Szenes, \emph{Thom polynomials of {M}orin singularities},
  Ann. of Math. (2) \textbf{175} (2012), no.~2, 567--629. \MR{2877067}

\bibitem[Buc02a]{ab2002.qv}
A.~S. Buch, \emph{Grothendieck classes of quiver varieties}, Duke Math. J.
  \textbf{115} (2002), no.~1, 75--103. \MR{1932326 (2003m:14018)}

\bibitem[Buc02b]{ab2002.klr}
\bysame, \emph{A {L}ittlewood-{R}ichardson rule for the {$K$}-theory of
  {G}rassmannians}, Acta Math. \textbf{189} (2002), no.~1, 37--78. \MR{1946917
  (2003j:14062)}

\bibitem[Buc05]{ab2005.ckt}
\bysame, \emph{Combinatorial {$K$}-theory}, Topics in cohomological studies of
  algebraic varieties, Trends Math., Birkh\"auser, Basel, 2005, pp.~87--103.
  \MR{2143073 (2007a:14056)}

\bibitem[Buc08]{ab2008}
\bysame, \emph{Quiver coefficients of {D}ynkin type}, Michigan Math. J.
  \textbf{57} (2008), 93--120, Special volume in honor of Melvin Hochster.
  \MR{2492443 (2009m:16032)}

\bibitem[FK94]{sfak1994}
S.~Fomin and A.N. Kirillov, \emph{{G}rothendieck polynomials and the
  {Y}ang-{B}axter equation}, Formal power series and algebraic
  combinatorics/{S}\'eries formelles et combinatoire alg\'ebrique, DIMACS,
  Piscataway, NJ, 1994, pp.~183--189. \MR{2307216}

\bibitem[FK96]{sfak1996}
\bysame, \emph{The {Y}ang-{B}axter equation, symmetric functions, and
  {S}chubert polynomials}, Discrete Math. (1996), no.~153, 123--143.

\bibitem[Len00]{cl2000}
Cristian Lenart, \emph{Combinatorial aspects of the {$K$}-theory of
  {G}rassmannians}, Ann. Comb. \textbf{4} (2000), no.~1, 67--82. \MR{1763950}

\bibitem[LS82]{alms1982}
A.~Lascoux and M.P. Sch{\"u}tzenberger, \emph{Structure de {H}opf de l'anneau
  de cohomologie et de l'anneau de {G}rothendieck d'une vari\'et\'e de
  drapeaux}, C. R. Acad. Sci. Paris S\'er. I Math. \textbf{295} (1982), no.~11,
  629--633. \MR{686357}

\bibitem[Mac95]{im1995}
I.~G. Macdonald, \emph{Symmetric functions and {H}all polynomials}, second ed.,
  Oxford Mathematical Monographs, The Clarendon Press, Oxford University Press,
  New York, 1995, With contributions by A. Zelevinsky, Oxford Science
  Publications. \MR{1354144}

\bibitem[Mil05]{em2005}
E.~Miller, \emph{Alternating formulas for {$K$}-theoretic quiver polynomials},
  Duke Math. J. \textbf{128} (2005), no.~1, 1--17. \MR{2137947 (2006e:05181)}

\bibitem[Rim13]{rr2013}
R.~Rim\'{a}nyi, \emph{On the cohomological {H}all algebra of {D}ynkin quivers},
  preprint, 2013.

\bibitem[RS17]{rras2017}
R.~Rim\'{a}nyi and A.~Szenes, \emph{${K}$-theoretic {T}hom polynomials and
  their expansions in {G}rothendieck polynomials}, in preparation, 2017.

\bibitem[Sze98]{as1998}
A.~Szenes, \emph{Iterated residues and multiple {B}ernoulli polynomials},
  Internat. Math. Res. Notices (1998), no.~18, 937--956. \MR{1653791
  (2000m:11022)}

\bibitem[Sze03]{as2003.ver}
\bysame, \emph{Residue theorem for rational trigonometric sums and {V}erlinde's
  formula}, Duke Math. J. \textbf{118} (2003), no.~2, 189--227. \MR{1980993
  (2004c:11145)}

\bibitem[Zie14]{mz2014}
M.~Zielenkiewicz, \emph{Integration over homogeneous spaces for classical {L}ie
  groups using iterated residues at infinity}, Cent. Eur. J. Math. \textbf{12}
  (2014), no.~4, 574--583. \MR{3152171}

\end{thebibliography}

\end{document}